\date{}
\newtheorem{statement}{}[section]
\newtheorem{theorem}[statement]{Theorem}
\newtheorem{lemma}[statement]{Lemma}
\newtheorem{proposition}[statement]{Proposition}
\newtheorem{definition}[statement]{Definition}
\newcommand\C{\mathbb C}
\newcommand\N{\mathbb N}
\newcommand\R{\mathbb R}
\newcommand\T{\mathbb T}
\newcommand\D{\mathbb D}
\newcommand\e{{\rm e}}
\newcommand\eps{\varepsilon}
\newcommand\ind{1 \kern - 0.28 em {\rm I}}
\newcommand\dis{\displaystyle}
\renewcommand \Re{{\mathfrak R}{\rm e}\,}
\newcommand\converge{\mathop{\longrightarrow}\limits}
\let\amphi=\phi
\let\phi=\varphi
\let\hat = \widehat
\let\tilde=\widetilde
\newcommand\tq{\, ; \ }
\title{\bf Compactification, and beyond, of composition operators on Hardy spaces by weights}
\author{\it Pascal~Lef\`evre, Daniel~Li,  \\ \it Herv\'e~Queff\'elec, Luis~Rodr{\'\i}guez-Piazza}
\date{\footnotesize \today}
\begin{document}

\maketitle

\noindent {\bf Abstract.} We study when multiplication by a weight can turn a non-compact composition operator on $H^2$ into a compact operator, and when 
it can be in Schatten classes. The $q$-summing case in $H^p$ is considered.  
We also study when this multiplication can turn a compact composition operator into a non-compact one. 
\medskip

\noindent {\bf MSC 2010} primary: 47B33 ; secondary: 46B28
\medskip

\noindent {\bf Key-words} approximation numbers ; composition operator ; compactification ; decompactification ; Hilbert-Schmidt operator; 
$p$-summing operators; Schatten classes

%%%%%%%%%%%%%%%%%%%%%%%%%%%%%%%%%%%%%%%%%%%%%%%%%%%%%%%%%%%%%%%%%%%%%%%%%%%%%
\section{Introduction}

Let $\phi \colon \D \to \D$ be an analytic self-map and $C_\phi \colon H^2 \to H^2$ be the associated composition operator $f \mapsto f \circ \phi$. 
For $w \in H^2$, the multiplication operator $M_w$ is defined formally by $f \mapsto w f$ and the weighted composition operator by 
$f \mapsto w\, (f \circ \phi)$. It is known (see \cite{GKP} for instance) that twisting $C_\phi$ by some $M_w$ can improve 
its compactness properties, and even its membership in Schatten classes $S_p$ or the decay of its approximation numbers (\cite[Theorem~2.3]{GDHL}). 

In this note, we study, in a rather qualitative way, the following problem: given a symbol $\phi$, when can we find a non-trivial $w \in H^2$ such that 
$M_w$ has a smoothing effect on $C_\phi$, namely when is $M_{w} C_\phi$ compact if $C_\phi$ was not? Or the other way round: when can we find $w$ 
such that $M_{w} C_\phi$ is not compact if $C_\phi$ was? 

In \cite[Proposition~2.4]{DHL}, it is proved that for $M_w C_\phi$ to be compact for some $w \in H^2$ ($w \not \equiv 0$), it is necessary that:
\begin{equation} \label{not exposed}
m (\{|\phi^\ast| = 1 \}) = 0 \, , 
\end{equation} 
where $m$ is the normalized Lebesgue measure on $\T$ and $\phi^\ast$ the boundary values function of $\phi$. On the other hand, in order that $M_w C_\phi$ 
be Hilbert-Schmidt for some $w \in H^2$, $w \not\equiv 0$, it is sufficient that:
\begin{equation} \label{not extreme}
\int_{\T} \log (1 - |\phi^\ast|) \, dm > - \infty \, 
\end{equation} 
(\cite[Proposition~2.5]{DHL}). Note that \eqref{not exposed} means that $\phi$ is not an exposed point of the unit ball of $H^\infty$ (\cite{Amar}), and that 
\eqref{not extreme} means that it is not an extreme point of this unit ball (\cite[Theorem~7.9]{Duren}). 
\smallskip

There is a gap between these two conditions. The purpose of this work to fill this gap in several respects, this filling explaining in passing the initial gap.
\smallskip

In Section~\ref{compactification}, we show that condition \eqref{not exposed} is necessary and sufficient to have a compact weighted composition operator. 
We also give examples showing how small approximation numbers we can obtain. In Section~\ref{HS et Schatten}, we show that condition 
\eqref{not extreme} is necessary and sufficient to get a Hilbert-Schmidt weighted composition operator, and we show that it is also necessary and sufficient 
for getting a weighted composition operator in some, or all, Schatten classes. In Section~\ref{Hp}, we consider the case of $H^p$ spaces and study the 
nuclearity and the summing properties of the weighted composition operators. In Section~\ref{decompactification} we show that a composition operator 
can become non-compact by weighting it if and only if the image of the symbol  touches the boundary of the unit disk.

%%%%%%%%%%%%%%%%%%%%%%%%%%%%%%%%%%%%%%%%%%%%%%%%%%%%%%%%%%%%%%%%%%%%%%%%%%%%
\section{Notation} \label{notation}

Let $\D$ be the open unit disk. The Hardy space $H^p$, $1 \leq p < \infty$, is the space of analytic functions $f \colon \D \to \C$ such that:
\begin{displaymath} 
\| f \|_p^p := \sup_{0 < r < 1} \frac{1}{2 \pi} \int_0^{2 \pi} |f (r \e^{i t})|^p \, dt < \infty \, .
\end{displaymath} 
Such functions have non-tangential limits $f^\ast (\e^{i t})$ almost everywhere on $\T = \partial \D$ and we have:
\begin{displaymath} 
\| f \|_p^p = \frac{1}{2 \pi} \int_0^{2 \pi} |f^\ast (\e^{i t})|^p \, dt \, .
\end{displaymath} 
For $p = 2$, $H^2$ is equivalently the space of analytic functions in $\D$ that can be written $f (z) = \sum_{n = 0}^\infty c_n z^n$ with 
$\| f \|_2^2 = \sum_{n = 0}^\infty |c_n|^2 < \infty$. In the sequel, for convenience, we write simply $\| \, . \, \|_2 = \| \, . \, \|$.
\smallskip

Any analytic self-map $\phi \colon \D \to \D$ induces a bounded operator $C_\phi \colon H^p \to H^p$, called the composition operator of 
\emph{symbol} $\phi$.
\smallskip

For $w \in H^p$, the multiplication operator $M_w$ is defined, formally, by $M_w f = w \, f$, and the \emph{weighted composition operator} $M_w C_\phi$ 
by $(M_w C_\phi) (f) = w \, (f \circ \phi)$. Note that to get $M_w C_\phi \colon H^p \to H^p$, it is necessary to have $w \in H^p$ since 
$(M_w C_\phi) (\ind) = w$. \emph{Throughout this paper it will be assumed that $w \in H^p$, and that $w \not\equiv 0$}. This membership is not sufficient 
in general; however $w \in H^\infty$ is sufficient (but not necessary!), since $H^\infty$ is the set of multipliers of $H^p$. Note that we may consider the bounded 
operator $M_w C_\phi$, even if $M_w$ is not bounded. 
\smallskip\goodbreak

Except in Section~\ref{Hp}, we work only with the Hilbert space $H^2$. 
\smallskip

%Recall however that, due to the Carleson embedding theorem, a composition operator is compact on $H^p$ if and only if it is compact on $H^2$.
\smallskip\goodbreak

For convenience, we will adopt in this paper the following terminology.

\begin{definition}
We say that the symbol $\phi$ is:
\begin{itemize}
\setlength\itemsep{1 pt}
\item [-] \emph{compactifiable} if $M_w C_\phi$ is compact for some $w \in H^2$ with $w \not\equiv 0$;
\item [-] \emph{decompactifiable} if $M_w C_\phi$ is bounded but not compact for some $w \in H^2$.
\end{itemize}
\end{definition}

For $\xi \in \T = \partial{\D}$ and $0 < h < 1$, the Carleson window $W (\xi, h)$ is defined as:
\begin{equation} 
W (\xi, h) = \{z \in \D \tq 1 - h \leq |z| \text{ and } |\arg (z \overline{\xi})| \leq \pi h \} \, .
\end{equation} 
If $\mu$ is a positive measure on $\overline \D$, the Carleson function of $\mu$ is:
\begin{equation} 
\rho_\mu (h) = \sup_{\xi \in \T} \mu [ \overline{W (\xi, h)}] \, .
\end{equation} 
The measure $\mu$ is called a Carleson measure when $\rho_\mu (h) = {\rm O}\, (h)$, and a vanishing Carleson measure when $\rho_\mu (h) = {\rm o}\, (h)$. 
By the Carleson embedding theorem, this is equivalent to say that the canonical inclusion $J_\mu \colon H^2 \to L^2 (\mu)$ is respectively bounded or 
compact.
\smallskip

It is convenient to coin the Hastings-Luecking box ${\tilde W} (\xi, h) \subseteq W (\xi, h) $ defined by:
\begin{equation} 
{\tilde W} (\xi, h) = \{z\in \D \tq 1 -  h \leq |z| < 1 - h/2 \text{ and } - \pi h < \arg (z \overline{\xi}) \leq \pi h \} \, .  
\end{equation} 

We denote $m$ the Haar measure (normalized Lebesgue measure) of $\T$. For a symbol $\phi$, $m_\phi = \phi^{\ast} (m)$ is the pull-back measure of $m$ by 
$\phi^{\ast} \colon \T\to \C$,  the (almost everywhere defined) radial limit function associated with $\phi$:
\begin{equation} 
\phi^{\ast} (\xi) = \lim_{r \to 1^{-}} \phi (r \xi) \, .
\end{equation} 
By definition $m_\phi (B) = m[ {\phi^\ast}^{- 1} (B)]$ for all Borel sets $B \subseteq \overline{\D}$. This measure $m_\phi$ is always a Carleson measure, 
due to the Littlewood subordination principle.
\smallskip

The Carleson function of $\phi$ is that of $m_\phi$ and is denoted $\rho_\phi$:
\begin{equation} 
\rho_\phi (h) = \sup_{\xi \in \T} m \big( {\phi^\ast}^{- 1} [\overline{W (\xi, h)}] \big) \, .
\end{equation} 

When the composition operator $C_\phi$ is compact on $H^2$, we have $|\phi^\ast| < 1$ a.e., and $m_\phi$ is supported by $\D$. Moreover, 
$m_\phi$ is then a vanishing Carleson measure.
\smallskip

Recall that a compact operator $T$ between separable Hilbert spaces $H_1$ and $H_2$ is in the Schatten class $S_p = S_p (H_1, H_2)$, $ p > 0$, if 
$\sum_{n \geq 0} [s_n (T)]^p < \infty$, where $\big( s_n (T) \big)$  is the sequence of singular numbers of $T$, i.e. the eigenvalues, arranged in non-increasing 
order, of $|T| = \sqrt{T^\ast T}$. For $p = 2$, $S_2 (H_1, H_2)$ is the Hilbert-Schmidt class. Let us also recall that, for $p \geq 2$, we have $T \in S_p$ if and 
only if $\sum_n \| T e_n\|^p < \infty$ for every orthonormal basis $(e_n)$ of $H_1$, and, for $ p \leq 2$, we have $T \in S_p$ if and only if 
$\sum_n \| T e_n\|^p < \infty$ for some orthonormal basis $(e_n)$ of $H_1$ (see \cite{Hu-Khoi-Zhu} for instance). It follows that if 
$S, T \colon H_1 \to H_2$ are two compact operators such that $\| S x \| \leq \| T x \|$ for all $x \in H_1$, then, for all $p > 0$, $T \in S_p$ implies 
$S \in S_p$.
\smallskip

We recall Luecking's theorem (\cite{Luecking}).
\begin{theorem} [Luecking's theorem] \label{Theo Luecking} 
Let $\mu$ be a positive Borel measure on $\D$. Then the canonical inclusion $J_\mu \colon H^2 \to L^2 (\mu)$ is in the Schatten class $S_p$, $p > 0$, 
if and only if:
\begin{displaymath} 
\sum_{n = 0}^\infty \sum_{j = 0}^{2^n - 1} [ 2^n \mu ({\tilde W}_{n, j}) ]^{p / 2} < \infty \, ,
\end{displaymath} 
where ${\tilde W}_{n, j} = {\tilde W} (\e^{ 2 j i \pi / 2^n} , 2^{ - n})$. 
\end{theorem}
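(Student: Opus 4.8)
This is Luecking's theorem, which the paper uses as a tool; nonetheless here is the route I would follow to prove it.

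\medskip

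\textbf{Reduction and decomposition.} The plan is to first reduce to a positive operator: $J_\mu\in S_p$ if and only if $T_\mu:=J_\mu^{\ast}J_\mu\in S_{p/2}$, where $T_\mu$ is the positive operator on $H^2$ with $\langle T_\mu f,g\rangle=\int_\D f\bar g\,d\mu$, and indeed $\|T_\mu\|_{S_{p/2}}^{p/2}=\|J_\mu\|_{S_p}^p$. The boxes $\tilde W_{n,j}$, $n\ge0$, $0\le j<2^n$, tile $\D$, each being a hyperbolic square on which $1-|w|\asymp 2^{-n}$; hence $L^2(\mu)=\bigoplus_{n,j}L^2(\mu_{n,j})$ with $\mu_{n,j}$ the restriction of $\mu$ to $\tilde W_{n,j}$, so that $J_\mu f=(J_{\mu_{n,j}}f)_{n,j}$ and $T_\mu=\sum_{n,j}T_{n,j}$ with $T_{n,j}:=J_{\mu_{n,j}}^{\ast}J_{\mu_{n,j}}\ge0$. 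Writing $a_{n,j}:=2^n\mu(\tilde W_{n,j})$ and using $2^n\asymp(1-|w|^2)^{-1}=\|k_w\|^2$ on $\tilde W_{n,j}$, we get $a_{n,j}\asymp\int_{\tilde W_{n,j}}\|k_w\|^2\,d\mu(w)$, whence $\sum_{n,j}a_{n,j}\asymp\operatorname{tr}T_\mu=\|J_\mu\|_{S_2}^2$; this already disposes of the case $p=2$.

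\medskip

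\textbf{Local estimate and geometry.} The basic lemma I would prove is that $\|J_{\mu_{n,j}}\|_{S_q}\asymp a_{n,j}^{1/2}$, uniformly in $(n,j)$, for every $q>0$. The lower bound comes from testing against the normalised reproducing kernel $\hat k_{w_{n,j}}$, which satisfies $|\hat k_{w_{n,j}}|\asymp 2^{n/2}$ on all of $\tilde W_{n,j}$, so $\|J_{\mu_{n,j}}\hat k_{w_{n,j}}\|^2\gtrsim a_{n,j}$. For the upper bound, a Cauchy-estimate argument on a slightly shrunk box shows that the restriction to $\tilde W_{n,j}$ of an $f\in H^2$ is, up to an error of size $\lesssim 2^{n/2}\|f\|\,2^{-\delta\ell}$ for a fixed $\delta>0$, determined by the first $\asymp\ell$ Taylor coefficients of $f$ at $w_{n,j}$; hence the singular numbers of $J_{\mu_{n,j}}$ decay geometrically past a uniformly bounded index, and summing in $\ell$ gives the claim. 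I would also record two geometric facts for the assembly: at a fixed scale $n$ the kernels $\{\hat k_{w_{n,j}}\}_j$ form a Bessel system with constant bounded independently of $n$ (a uniform Carleson estimate), while for widely separated scales $|\langle\hat k_w,\hat k_{w'}\rangle|$ decays like $2^{-|n-m|/2}$ when $1-|w|\asymp 2^{-n}$, $1-|w'|\asymp 2^{-m}$.

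\medskip

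\textbf{The easy halves.} If $p\le2$ then $p/2\le1$ and the $(p/2)$-quasi-triangle inequality for $\|\cdot\|_{S_{p/2}}$ gives $\|J_\mu\|_{S_p}^p=\|T_\mu\|_{S_{p/2}}^{p/2}\le\sum_{n,j}\|T_{n,j}\|_{S_{p/2}}^{p/2}=\sum_{n,j}\|J_{\mu_{n,j}}\|_{S_p}^p\lesssim\sum_{n,j}a_{n,j}^{p/2}$, i.e. sufficiency for $p\le2$. If $p\ge2$, I would split the index set into a bounded number of \emph{sparse} subfamilies of boxes, separated in both scale and angle; on each such subfamily the spans of the box kernels are mutually almost orthogonal, so $\sum'T_{n,j}$ is an almost block-diagonal positive operator and $\bigl\|\sum'T_{n,j}\bigr\|_{S_{p/2}}^{p/2}\gtrsim\sum'\|T_{n,j}\|_{S_{p/2}}^{p/2}\asymp\sum'a_{n,j}^{p/2}$ (a perturbation of the exact block-diagonal identity, legitimate since $p/2\ge1$). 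As $0\le\sum'T_{n,j}\le T_\mu$, min-max gives $\sum'a_{n,j}^{p/2}\lesssim\|T_\mu\|_{S_{p/2}}^{p/2}=\|J_\mu\|_{S_p}^p$, and summing the boundedly many subfamilies yields necessity for $p\ge2$.

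\medskip

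\textbf{The hard halves, and the main obstacle.} What remains is sufficiency for $p>2$ and necessity for $p<2$, and these are the genuinely delicate cases: the full family $\{\hat k_{w_{n,j}}\}$ is \emph{not} a global Bessel system---the inter-scale inner products decay only like $2^{-|n-m|/2}$---so $J_\mu$ cannot simply be factored through the diagonal operator $\operatorname{diag}(a_{n,j}^{1/2})$ and the arguments above break down. Here I would follow Luecking and bring in Khinchine's inequality: one averages over random Rademacher sign patterns attached to the boxes (and, on the function side, to the Taylor blocks from the local estimate) so that the quadratic embedding quantity $\int_\D|f|^2\,d\mu$ becomes comparable to an average of $p$-th powers of \emph{linear} expressions in $f$; combining this with the local estimate, the (quasi-)triangle inequality for $S_p$, and a Schur-type summation that absorbs the $2^{-|n-m|/2}$ scale decay, one sandwiches $J_\mu$ in the $S_p$-order between operators whose $S_p$-norm is comparable to $\bigl(\sum_{n,j}a_{n,j}^{p/2}\bigr)^{1/p}$. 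I expect this sandwiching to be the crux---keeping all constants uniform in $(n,j)$ while simultaneously handling the randomisation, the cross-scale decay, and the fact that subadditivity of the $S_q$-quasi-norm is available only for $q\ge1$---with the range $0<p<2$ (necessity) the more technical side, essentially dual to the case $p>2$ of sufficiency.
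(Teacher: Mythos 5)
The paper does not prove this statement at all: it is quoted as Luecking's theorem and used as a black box, with the proof delegated to the cited reference (Luecking, \emph{Trace ideal criteria for Toeplitz operators}) and the windowed variant to \cite{LLQR-2008}. So there is no internal proof to compare you against; what can be judged is whether your outline would stand on its own as a proof of the quoted result.

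Your architecture is the right one and several pieces are genuinely correct as stated: the reduction $J_\mu\in S_p\Leftrightarrow T_\mu=J_\mu^{\ast}J_\mu\in S_{p/2}$, the tiling of $\D$ by the boxes ${\tilde W}_{n,j}$ with $\|k_w\|^2\approx 2^n$ there (which settles $p=2$ by a trace computation), the local estimate $\|J_{\mu_{n,j}}\|_{S_q}\approx [2^n\mu({\tilde W}_{n,j})]^{1/2}$ (lower bound by the normalized kernel, upper bound by geometric decay of singular numbers), and the sufficiency for $p\leq 2$ via $(p/2)$-subadditivity of the $S_{p/2}$ quasi-norm. But the proposal is not a complete proof: the two implications that constitute the real content of Luecking's theorem --- sufficiency for $p>2$ and necessity for $p<2$ --- are only named, not executed. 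Saying that one ``brings in Khinchine's inequality'', randomizes signs over boxes and Taylor blocks, and performs ``a Schur-type summation that absorbs the $2^{-|n-m|/2}$ scale decay'' identifies the correct ingredients of Luecking's argument, but none of the sandwiching inequalities is actually established, and you yourself flag this as the crux. Likewise, in the necessity argument for $p\geq 2$, the step $\bigl\|\sum{}'T_{n,j}\bigr\|_{S_{p/2}}^{p/2}\gtrsim\sum{}'\|T_{n,j}\|_{S_{p/2}}^{p/2}$ for a sparse subfamily is asserted as ``a perturbation of the exact block-diagonal identity'' without any quantitative almost-orthogonality estimate (one must choose the sparse families so that the relevant Gram-type off-diagonal contribution is small in the appropriate norm, uniformly). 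In short: a sound and well-informed road map, consistent with the known proof, but with the hard halves left as declared intentions rather than proved steps; to use the theorem rigorously one should, as the paper does, simply cite Luecking.
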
 

Let us point out that the above condition can be replaced by the following variant (\cite[Proposition 3.3]{LLQR-2008}):
\begin{displaymath} 
\sum_{n = 0}^\infty \sum_{j = 0}^{2^n - 1} [ 2^n \mu (W_{n, j}) ]^{p / 2} < \infty \, ,
\end{displaymath} 
where $W_{n, j} = W (\e^{ 2 j i \pi / 2^n} , 2^{ - n})$.
\medskip

As usual, the notation $A \lesssim B$ means that $A \leq c \, B$ for some positive constant $c$, and $A \approx B$ means that $A \lesssim B$ and 
$B \lesssim A$.
\goodbreak

%%%%%%%%%%%%%%%%%%%%%%%%%%%%%%%%%%%%%%%%%%%%%%%%%%%%%%%%%%%%%%%%%%%%%%%%%%%%%%
\section{Compactification} \label{compactification} 

 \begin{theorem}\label{lille1} 
An analytic self-map $\phi \colon \D \to \D$ is compactifiable if and only if $m (\{|\phi^\ast| = 1\}) = 0$.
\end{theorem}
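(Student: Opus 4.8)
The necessity is \cite[Proposition~2.4]{DHL} (it is exactly \eqref{not exposed}), and it can be seen in one line: if $w \not\equiv 0$ and $m(\{|\phi^\ast| = 1\}) > 0$, then, since a non-zero $H^2$ function has non-vanishing boundary values almost everywhere,
\[
\| M_w C_\phi (z^n) \|^2 \;=\; \int_\T |w^\ast|^2 |\phi^\ast|^{2n}\,dm \;\geq\; \int_{\{|\phi^\ast| = 1\}} |w^\ast|^2\,dm \;>\; 0 \qquad (n \in \N),
\]
whereas $z^n \to 0$ weakly in $H^2$; so $M_w C_\phi$ cannot be compact. The substance of the statement is the converse, which I would prove as follows.

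Assume $m(\{|\phi^\ast| = 1\}) = 0$. The plan is to exhibit an \emph{outer} function $w \in H^\infty$ with $\|w\|_\infty \leq 1$ and $w \not\equiv 0$ such that the pull-back measure $\mu_w := \phi^\ast\big(|w^\ast|^2\,m\big)$ is a \emph{vanishing} Carleson measure. Granting this, $M_w C_\phi$ is bounded (as $w \in H^\infty$), the hypothesis makes $\mu_w$ carried by $\D$ (its mass on $\T$ is $\int_{\{|\phi^\ast|=1\}}|w^\ast|^2\,dm=0$), and computing on polynomials and extending by density gives $\|M_w C_\phi f\|^2 = \int_\D |f|^2\,d\mu_w$ for every $f \in H^2$; hence $M_w C_\phi$ and the canonical embedding $J_{\mu_w} \colon H^2 \to L^2(\mu_w)$ have the same singular numbers. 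By the Carleson embedding theorem $J_{\mu_w}$ is then compact, so $M_w C_\phi$ is compact and $\phi$ is compactifiable.

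To build $w$, set $G_n = \{|\phi^\ast| \geq 1 - 2^{-n}\}$ and $r_n = m(G_n)$. I may assume $r_n > 0$ for all $n$: otherwise $\|\phi\|_\infty < 1$, $C_\phi$ is already compact, and $w = 1$ works. By hypothesis the $G_n$ decrease to a null set, so $(r_n)$ decreases to $0$ with $r_0 = 1$ and $\sum_{n \geq 0}(r_n - r_{n+1}) = 1$. Let $\psi \colon \T \to (0,1]$ be equal to $r_n$ on $G_n \setminus G_{n+1}$, and let $w$ be the outer function with $|w^\ast| = \psi$. The one delicate point — really the crux — is that $\psi$ is log-integrable, so that $w \not\equiv 0$: indeed
\[
\int_\T \log\frac{1}{\psi}\,dm \;=\; \sum_{n \geq 0} (r_n - r_{n+1})\,\log\frac{1}{r_n} \;\leq\; \int_0^1 \log\frac{1}{x}\,dx \;=\; 1 ,
\]
the middle sum being a lower Riemann sum for the last integral since $x \mapsto \log(1/x)$ is nonnegative and decreasing on $(0,1]$. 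It is exactly here that the hypothesis enters, through the finiteness $\sum_n(r_n - r_{n+1}) = r_0 < \infty$; and this is what one expects to break down when $\phi$ is an exposed point of the ball of $H^\infty$.

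Finally, one checks that $\mu_w$ is a vanishing Carleson measure. Fix $h \in (0,1)$ and let $n = \lfloor \log_2(1/h) \rfloor$, so that $h \leq 2^{-n}$ and $n \to \infty$ as $h \to 0$. Since $\overline{W(\xi, h)} \subseteq \{|z| \geq 1 - h\}$ and $1 - h \geq 1 - 2^{-n}$, we have ${\phi^\ast}^{-1}\big(\overline{W(\xi, h)}\big) \subseteq G_n$, on which $\psi^2 \leq r_n^2$ because $r_k \leq r_n$ for $k \geq n$; therefore, for every $\xi \in \T$,
\[
\mu_w\big[\overline{W(\xi, h)}\big] \;=\; \int_{{\phi^\ast}^{-1}(\overline{W(\xi, h)})} |w^\ast|^2\,dm \;\leq\; r_n^2\, m\big({\phi^\ast}^{-1}(\overline{W(\xi, h)})\big) \;\leq\; r_n^2\, \rho_\phi(h) \;\lesssim\; r_n^2\, h ,
\]
the last step by Littlewood's bound $\rho_\phi(h) = {\rm O}(h)$. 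Hence $\rho_{\mu_w}(h) \lesssim r_{\lfloor \log_2(1/h)\rfloor}^2\, h = {\rm o}(h)$, and we are done. The main obstacle, as indicated, is the a priori worry that $|w^\ast|$ cannot be made to decay fast enough near $\{|\phi^\ast| = 1\}$ while keeping $\log |w^\ast|$ integrable; it dissolves once one notices that the ``shells'' $G_n \setminus G_{n+1}$ have summable measure — the only, but crucial, consequence of $m(\{|\phi^\ast| = 1\}) = 0$ that is used.
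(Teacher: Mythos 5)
Your proof is correct. The necessity argument is the paper's (weak nullity of $(z^n)$ plus the fact that a non-trivial $H^2$ function has non-vanishing boundary values a.e.), and for sufficiency you follow the same overall strategy: build an outer weight $w\in H^\infty$, small where $|\phi^\ast|$ is close to $1$, and conclude that $\nu=\phi^\ast(|w^\ast|^2 m)$ is a vanishing Carleson measure using $\rho_\phi(h)={\rm O}\,(h)$ and the Carleson embedding theorem. The construction of $w$ is, however, genuinely different in its mechanics. The paper extracts a subsequence $(k_n)$ with $\sum_n c_{k_n}\log n<\infty$, where $c_n=m\big({\phi^\ast}^{-1}(\Gamma_{2^{-n}})\big)$, prescribes the value $1/n$ on $F_{k_n}$, and realizes $w$ as an infinite product of outer functions whose convergence on compacta must be checked; you take a single outer function whose boundary modulus equals $r_n=m(G_n)$ on the shell $G_n\setminus G_{n+1}$, and log-integrability is automatic from the lower Riemann sum $\sum_n(r_n-r_{n+1})\log(1/r_n)\le\int_0^1\log(1/x)\,dx=1$. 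This removes the need for a subsequence and for the product-convergence estimate, and it yields the explicit rate $\rho_\nu(h)\lesssim r_{\lfloor\log_2(1/h)\rfloor}^2\,h$ directly in terms of the distribution function of $|\phi^\ast|$. What the paper's template buys in exchange is flexibility: the values assigned on the chosen level sets can be tuned, which is exactly what is exploited in Theorems~\ref{specif} and~\ref{specif-bis} to obtain quantitative decay of $a_n(M_wC_\phi)$; your weight is canonical but fixed. A minor point of bookkeeping: the hypothesis $m(\{|\phi^\ast|=1\})=0$ is not really what makes the log-integrability sum finite (the Riemann-sum bound holds regardless); it is what guarantees that the shells cover $\T$ up to a null set and that $r_n\to0$, which is where the ${\rm o}\,(h)$ (rather than merely ${\rm O}\,(h)$) estimate comes from.
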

\begin{proof}
The necessary part is proved in \cite[Proposition~2.4]{DHL}. Let us recall  the easy proof of this fact. 

Indeed, suppose that $T = M_w \, C_\varphi$ is compact and that $|\phi^\ast| = 1$ on $E$, with $m (E) > 0$. Since $(z^n)_n$ converges weakly to $0$ 
in $H^2$ and since $T (z^n) = w \, \varphi^{n}$, we should have: 
\begin{displaymath} 
\int_{E} |w^\ast|^2 \, dm = \int_{E} |w^\ast|^2 |\phi^\ast|^{2 n} \, dm 
\leq \int_{\T} |w^\ast|^2 |\phi^\ast|^{2 n} \, dm =  \Vert T (z^n) \Vert^2  \converge_{n \to \infty} 0 \, ;
\end{displaymath} 
but this would imply that $w$ is null a.e. on $E$ and hence $w \equiv 0$ (see \cite{Duren}, Theorem~2.2), which was excluded.
\smallskip

Let us now prove the sufficient condition.

Assume that $m (\{|\phi^\ast| = 1 \}) = 0$ holds. Given $w \in H^2$, we can write:
\begin{displaymath} 
\Vert M_{w} C_{\phi} (f) \Vert^2 = \int_{\T}| w^\ast |^2 |f \circ \phi^\ast|^2 \, dm = \int_{\D} |f|^2 \, d\nu \, ,
\end{displaymath} 
where $\nu  = \nu_w = \phi^{\ast} (|w^\ast|^2 m)$, that is $\nu (A) = \int_{{\phi^{\ast}}^{- 1} (A)}|w^\ast|^2 \, dm$. By the Carleson embedding 
theorem (see \cite[page 129]{COMA}), a necessary and sufficient condition for the operator $M_{w} C_\phi \colon H^2 \to H^2$ to be compact is that 
$\nu$ is a vanishing Carleson measure for $H^2$. We now produce a suitable $w$, $w \not\equiv 0$. 

Let:
\begin{equation} \label{corona}
\Gamma_h = \{z \tq 1 - h \leq |z| < 1 \} 
\end{equation} 
and set:
\begin{displaymath} 
F_n = {\phi^{\ast}}^{- 1} (\Gamma_{2^{- n}}) \qquad \text{and} \qquad c_n = m (F_n) \, .
\end{displaymath} 
Our assumption implies that $c_n \converge_{n \to \infty} 0$. We can hence find an increasing sequence $(k_n)_{n \geq 1}$ of integers such that:
\begin{equation} \label{sub} 
\sum_{n = 1}^\infty c_{k_n} \log n < \infty \, .
\end{equation}
Let $\amphi_n \colon \T \to \R^+$ be defined as:
\begin{displaymath} 
\amphi_n = 
\left\{
\begin{array} {rcl}
\displaystyle \frac{1}{n} & \text{on} & F_{k_n} \, , \smallskip \\ 
1 & \text{on} & \T \setminus F_{k_n} \, .
\end{array}
\right.
\end{displaymath} 
Let $w_n$ be the associated outer function, satisfying $|w_{n}^{\ast}|=\amphi_n$, namely $w_n = \exp \, (- \psi_n)$, with :
\begin{displaymath} 
\psi_{n}(z) = \int_{\T} \frac{1 + z \, \e^{- i t}}{1 - z \, \e^{- i t}} \log \frac{1}{\amphi_{n} (t)} \, dm (t)
=\log n \int_{F_{k_n}} \frac{1 + z \, \e^{- i t}}{1 - z \, \e^{-i t}} \, dm (t) \, .
\end{displaymath} 
Observe that $\Re \psi_{n} (z) = \log n \int_{F_{k_n}} P_{z} (t) \, dm (t)$, where $P_{z} (t) = \frac{1 - |z|^2}{|1 - z \, \e^{-i t}|^2}$ is the Poisson 
kernel, so that $\Re \psi_{n}(z) \geq 0$ and $|w_{n} (z)|\leq 1$. Moreover $|w_{n}^{\ast}| = \frac{1}{n}$ on $F_{k_n}$. 

The condition \eqref{sub} ensures that the infinite product $w  = \prod_{n} w_n$ converges uniformly on compact subsets of $\D$, and defines a function 
$w \in H^\infty$, bounded by $1$ and without zeros. Indeed, since $\Re \psi_n \geq 0$, we see that:
\begin{displaymath} 
|1 - w_{n} (z)| \leq |\psi_{n} (z)| \leq \log n \int_{F_{k_n}}\frac{1 + |z|}{1 - |z|} \, dm (t) = (c_{k_n} \log n) \, \frac{1 + |z|}{1 - |z|} \, ;
\end{displaymath} 
subsequently, the series $\sum (1 - w_{n})$ converges normally on compact subsets of $\D$, and the infinite product $\prod w_n$ converges uniformly on 
compact subsets of $\D$, as claimed. 

The weighted composition operator $M_{w} C_\phi$ is bounded since $w \in H^\infty$.

Let finally $0 < h < 2^{- k_1}$ and $n = n (h)$ such that $2^{- k_{n + 1}} \leq h < 2^{- k_n}$. Let $\xi \in \T$. Then $W (\xi, h) \subseteq \Gamma_h$, 
so that:
\begin{displaymath} 
{\phi^{\ast}}^{- 1}[W (\xi, h)] \subseteq {\phi^{\ast}}^{- 1} (\Gamma_h) \subseteq {\phi^{\ast}}^{- 1} (\Gamma_{2^{- k_n}}) = F_{k_n} \, .
\end{displaymath} 

As a consequence, $|w^\ast (u)| \leq |w_{n}^\ast (u)| \leq \frac{1}{n}$ for all $u \in {\phi^{\ast}}^{- 1} [W (\xi, h)]$, and:
\begin{displaymath} 
\nu [W (\xi, h)] = \int_{{\phi^\ast}^{- 1} [W (\xi, h)]} |w^\ast|^{2} \, dm \leq \frac{1}{n^2} \, m_{\phi} [W (\xi, h)] \leq  \frac{1}{n^2} \, C h \, ,
\end{displaymath} 
because we know (see \cite[page 129]{COMA}) that $m_\phi$ is a Carleson measure. This ends the proof, since $n = n (h)$ tends to $\infty$ when $h$ goes to $0$.
\end{proof}

\noindent {\bf Remark.} The previous argument can be sometimes quantified, and the degree of compactness of $M_w C_\varphi$ specified  (even if there are 
limitations, as shown by the forthcoming Theorem~\ref{lens}).  

\begin{theorem} \label{specif} 
For each $\gamma$ with $0 < \gamma < 1/2$, there  exist a non-compact composition operator $C_\phi \colon H^2 \to H^2$ and a weight 
$w \in H^\infty$ such that, for some constant $b > 0$, we have:
\begin{displaymath} 
a_n (M_w C_\phi) \lesssim \exp (- b \, n^\gamma) \, .
\end{displaymath} 

In particular $M_w C_\phi$ belongs to all Schatten classes $S_p (H^2)$, $p > 0$.
\end{theorem}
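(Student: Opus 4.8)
The plan is to revisit the construction in the proof of Theorem~\ref{lille1}, but choose the symbol $\phi$ and the weight $w$ very carefully so that the measure $\nu = \nu_w = \phi^\ast(|w^\ast|^2 m)$ is not merely a vanishing Carleson measure, but one whose Luecking sums (Theorem~\ref{Theo Luecking}) decay fast enough to force rapidly decreasing singular numbers. Concretely, I would pick a "slow" symbol touching the boundary, e.g. a lens-type or cusp-type map for which $C_\phi$ is not compact but for which the sets $F_n = {\phi^\ast}^{-1}(\Gamma_{2^{-n}})$ have controlled, small measure $c_n = m(F_n) \to 0$ with a quantitative rate (say $c_n \lesssim e^{-c\, 2^{\alpha n}}$ for a suitable $\alpha$, which one can arrange for symbols of the form $\phi(z)$ approaching the boundary exponentially fast in argument). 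Since we are free to choose $\phi$, we may simply fix $\phi$ so that $c_n$ decays as fast as we like; the only constraint is that $|\phi^\ast| = 1$ on a set of positive measure, i.e.\ $C_\phi$ is genuinely non-compact.

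Next I would redo the weight construction from Theorem~\ref{lille1} with a sharper profile: instead of putting $|w_n^\ast| = 1/n$ on $F_{k_n}$, I would use a faster-decaying profile, choosing $w$ to be the outer function whose modulus on $F_n \setminus F_{n+1}$ (the "$n$-th annular pullback shell") is roughly $e^{-\delta\, 2^{\beta n}}$ for appropriate $\delta,\beta > 0$. The convergence of the infinite product / of the defining Poisson integral is guaranteed exactly as before provided $\sum_n c_n (\text{exponent})_n < \infty$, which holds as long as $c_n$ decays faster than the chosen exponents; since $c_n$ is at our disposal this is easy to secure, and $w \in H^\infty$, $\|w\|_\infty \le 1$, $w$ zero-free, so $M_w C_\phi$ is bounded.

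Then I would estimate $\nu[W_{n,j}]$. For a window $W(\xi,h)$ with $2^{-(k+1)} \le h < 2^{-k}$, we have ${\phi^\ast}^{-1}[W(\xi,h)] \subseteq F_k$, and on this set $|w^\ast| \le e^{-\delta 2^{\beta k}}$ (roughly), so
\begin{displaymath}
2^k \nu[W(\xi,h)] \le 2^k e^{-2\delta 2^{\beta k}} m_\phi[W(\xi,h)] \lesssim e^{-2\delta 2^{\beta k}},
\end{displaymath}
using that $m_\phi$ is a Carleson measure. Plugging this into Luecking's criterion, the double sum $\sum_n \sum_{j=0}^{2^n-1} [2^n \nu(W_{n,j})]^{p/2}$ is dominated by $\sum_n 2^n e^{-p\delta 2^{\beta n}}$, which converges for every $p > 0$; this already gives $M_w C_\phi \in S_p$ for all $p > 0$. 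To get the explicit bound $a_n(M_w C_\phi) \lesssim \exp(-b\, n^\gamma)$, I would instead use the standard estimate of approximation (= singular) numbers of the embedding $J_\nu$ in terms of the Carleson function of $\nu$ restricted to thin annuli near $\T$ (as in the Queff\'elec–Seip / LLQR machinery, or directly via a dyadic decomposition of $\nu$ and rank estimates): truncating $\phi$ at level $|z| = 1 - 2^{-k}$ produces a finite-rank operator of rank $\lesssim 2^k$ approximating $M_w C_\phi$ to within $\sup_{|z|\ge 1-2^{-k}} |w| \lesssim e^{-\delta 2^{\beta k}}$ in operator norm; optimizing $k$ against a given index $n \approx 2^k$ yields $a_n \lesssim e^{-\delta n^{\beta/\cdots}}$, and tuning the parameters $\beta$ (equivalently the growth rate of $c_n$ and of the exponents in $w$) lets $\gamma = \beta/(\beta+1)$ or similar range over $(0,1/2)$ — the restriction $\gamma < 1/2$ reflecting the intrinsic limitation that the rank at scale $2^{-k}$ is $\gtrsim 2^k$, so one cannot beat the rate dictated by the "volume" of dyadic windows.

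The main obstacle will be the last step: passing from the membership statement (convergence of the Luecking sum) to the \emph{quantitative} singular-value bound with the correct exponent $\gamma$, and checking that $\gamma$ can be made arbitrarily close to $1/2$ but no larger. This requires a clean finite-rank approximation argument — writing $M_w C_\phi = M_w C_{\phi} \circ (\text{truncation}) + \text{remainder}$ with the remainder's norm controlled by $\sup_{1-2^{-k}\le |z|<1}|w(z)|$, which in turn must be extracted from the outer-function representation of $w$ (a Poisson-integral estimate, since $w$ is only bounded, not continuous up to $\T$). One must also verify that the chosen $\phi$ genuinely gives a non-compact $C_\phi$ while simultaneously allowing $c_n$ to be as small as needed; a lens map or a suitably slow radial-type symbol does the job, and I would cite the known behaviour of such symbols rather than recompute it.
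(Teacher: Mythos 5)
Your plan follows the paper's general strategy (outer weight killing the pull-back measure on the annuli $\Gamma_{2^{-n}}$, then a finite-rank/Carleson estimate as in Proposition~\ref{simp}), but it rests on a premise that is both false and fatal. You write that the only constraint on $\phi$ is that ``$|\phi^\ast|=1$ on a set of positive measure, i.e.\ $C_\phi$ is genuinely non-compact.'' First, non-compactness of $C_\phi$ is \emph{not} equivalent to $m(\{|\phi^\ast|=1\})>0$. Second, and worse, if $m(\{|\phi^\ast|=1\})>0$ then by the necessity part of Theorem~\ref{lille1} no weight $w\not\equiv 0$ makes $M_wC_\phi$ even compact, so your conclusion is unreachable for such a $\phi$; moreover in that case $F_n\supseteq\{|\phi^\ast|=1\}$ for every $n$, so $c_n=m(F_n)$ does not tend to $0$, contradicting the super-exponential decay $c_n\lesssim\e^{-c2^{\alpha n}}$ you want to impose. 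The whole point of the theorem is to use a symbol with $|\phi^\ast|<1$ a.e.\ (hence compactifiable) whose $C_\phi$ is nevertheless non-compact.

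There is a second, quantitative gap: you cannot ``fix $\phi$ so that $c_n$ decays as fast as we like'' while keeping $C_\phi$ non-compact. Since $\rho_\phi(h)\le m_\phi(\Gamma_h)=c_{n}$ for $h\approx 2^{-n}$, a decay $c_n=o(2^{-n})$ would make $m_\phi$ a vanishing Carleson measure and $C_\phi$ compact; non-compactness forces $c_n\gtrsim 2^{-n}$ along a sequence. This lower bound, combined with the requirement $\log|w^\ast|\in L^1(\T)$ (i.e.\ $\sum_k c_k\log(1/\delta_k)<\infty$ if $|w^\ast|=\delta_k$ on the $k$-th shell), is what limits the weight's decay to roughly $\delta_k\gtrsim\e^{-2^{k}}$ and hence caps the rate at $\exp(-b\sqrt{n})$; the restriction $\gamma<1/2$ comes from this tension, not from the rank-versus-window count you invoke. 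The genuinely delicate ingredient, which your proposal leaves as a citation placeholder, is the existence of a non-compact $C_\phi$ with $|\phi^\ast|<1$ a.e.\ and $c_k\approx 2^{-k/\beta}$ for $\beta>1$ arbitrarily close to $1$; the paper gets it from the explicit construction $|\phi^\ast(\e^{it})|=\exp(-|\sin(t/2)|^\beta)$ of \cite{LLQR-2008} (non-compactness being the non-trivial fact there), and then the choice $\delta_k=\exp(-2^{k/\beta}/k^2)$ and the balance $\e^{-n2^{-l}}+\delta_l$ give $a_n\lesssim\exp(-b\,n^{1/(\beta+1)}/(\log n)^2)$, whence any $\gamma<1/2$. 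Without that specific family of symbols (or an equivalent one), your parameter-tuning step has nothing to tune.
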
 

For the proof, we recall the following simple result.

\begin{proposition} \label{simp} 
Let $\nu$ be a vanishing Carleson measure on $\D$. Then:
\begin{displaymath} 
a_{n} (J_\nu) \lesssim \inf_{0 < h < 1} \bigg( \e^{- n h} + \sup_{0 \leq t \leq h} \sqrt{\frac{\rho_\nu (t)}{t}} \, \bigg) \, \raise 1 pt \hbox{,}
\end{displaymath} 
where $J_\nu \colon H^2\to L^{2}(\nu)$ is the canonical inclusion.

In particular, if $w \in H^\infty$ and $\varphi$ is a symbol, we have:
\begin{displaymath} 
a_{n} (M_w C_\varphi) \lesssim \inf_{0 < h < 1} \bigg( \e^{- n h} + \sup_{0 \leq t \leq h} \sqrt{\frac{\rho_\nu (t)}{t}} \, \bigg) \, \raise 1 pt \hbox{,}
\end{displaymath} 
where $\nu = \phi^\ast (|w^\ast|^2 m)$ is the pull-back measure of $|w^\ast|^2 m$ by $\phi^\ast$.
\end{proposition}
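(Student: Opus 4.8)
The plan is to build, for each $h\in(0,1)$, an explicit finite-rank approximant of $J_\nu$ of rank comparable to $n$, by splitting $\D$ into a central disc $\{|z|\le 1-h\}$ and a boundary annulus $\Gamma_h=\{1-h\le|z|<1\}$ and estimating the two pieces separately. Concretely, for $f=\sum_k c_k z^k\in H^2$ let $Rf=\sum_{k<n-1}c_k z^k$, viewed as an element of $L^2(\nu)$; then $R$ has rank $\le n-1$, so $a_n(J_\nu)\le\|J_\nu-R\|$, while $(J_\nu-R)f=g$ with $g=\sum_{k\ge n-1}c_k z^k$ and $\|g\|_{H^2}\le\|f\|$. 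Everything reduces to bounding $\int_\D|g|^2\,d\nu=\int_{\{|z|\le 1-h\}}|g|^2\,d\nu+\int_{\Gamma_h}|g|^2\,d\nu$.

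On the central disc I would use the elementary pointwise bound $\sup_{|z|\le 1-h}|g(z)|\le\|g\|_{H^2}\,(1-h)^{n-1}\bigl(1-(1-h)^2\bigr)^{-1/2}\lesssim\|f\|\,(1-h)^{n-1}/\sqrt h$ (maximum modulus together with Cauchy--Schwarz), combined with the finiteness of $\nu$ (a Carleson measure satisfies $\nu(\D)\le\|J_\nu\|^2<\infty$); this yields $\int_{\{|z|\le 1-h\}}|g|^2\,d\nu\lesssim\|f\|^2\,(1-h)^{2(n-1)}/h$, a quantity of size $\mathrm e^{-2nh}$ up to the harmless factor $1/h$.

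The heart of the matter is the annular part, where the point is that the truncated measure $\nu_{|\Gamma_h}$ is again a Carleson measure, with Carleson constant $\lesssim A(h):=\sup_{0\le t\le h}\rho_\nu(t)/t$. For a window of size $t\le h$ this is immediate: $\nu_{|\Gamma_h}\bigl(\overline{W(\xi,t)}\bigr)\le\rho_\nu(t)\le A(h)\,t$. For $t>h$, the set $\overline{W(\xi,t)}\cap\Gamma_h$ lies over an arc of $\T$ of length at most $2\pi t$, which we partition into at most $\lceil t/h\rceil$ arcs of length at most $2\pi h$; the corresponding windows $W(\xi_i,h)$, \emph{of the same size $h$} (so that no regularity of $\rho_\nu$ beyond its definition is needed), cover $\overline{W(\xi,t)}\cap\Gamma_h$, whence $\nu_{|\Gamma_h}\bigl(\overline{W(\xi,t)}\bigr)\lesssim(t/h)\,\rho_\nu(h)\le A(h)\,t$. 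The Carleson embedding theorem, in quantitative form, then gives $\int_{\Gamma_h}|g|^2\,d\nu=\int_\D|g|^2\,d\nu_{|\Gamma_h}\lesssim A(h)\,\|g\|^2\le A(h)\,\|f\|^2$.

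Combining the two estimates yields $a_n(J_\nu)\lesssim(1-h)^{n-1}/\sqrt h+\sqrt{A(h)}\lesssim\mathrm e^{-nh}+\sqrt{A(h)}$, after a harmless adjustment of the scale inside the infimum (the spurious factor $1/\sqrt h$ only shifts the optimal $h$ by a logarithmic amount, which is absorbed once one optimizes over all $h\in(0,1)$; one may also restrict to $h\le 1/2$, the range $h\in[1/2,1)$ contributing nothing new since $\mathrm e^{-nh}$ is then exponentially small); taking the infimum over $h$ gives the asserted estimate for $J_\nu$. For the second display, note that the argument above only uses that $\nu$ is a Carleson measure; when $w\in H^\infty$ the pull-back $\nu=\varphi^\ast(|w^\ast|^2 m)$ satisfies $\nu\le\|w\|_\infty^2\,m_\varphi$ and is therefore a Carleson measure, since $m_\varphi$ is one by the Littlewood subordination principle, and the identity $\|M_w C_\varphi f\|^2=\int_\T|w^\ast|^2\,|f\circ\varphi^\ast|^2\,dm=\int_\D|f|^2\,d\nu$ shows that $(M_w C_\varphi)^\ast(M_w C_\varphi)=J_\nu^\ast J_\nu$, so $a_n(M_w C_\varphi)=a_n(J_\nu)$ and the general estimate applies verbatim. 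The step I expect to be delicate is exactly the quantitative control of the Carleson function of $\nu_{|\Gamma_h}$ at scales $t>h$ (the covering argument); passing from $(1-h)^n/\sqrt h$ to the cleaner $\mathrm e^{-nh}$ in the statement is only bookkeeping.
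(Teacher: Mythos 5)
Most of your plan is sound and is essentially the argument behind the paper's proof, which simply adapts \cite[Theorem 5.1]{DDHL}: Taylor truncation, splitting of $\int_\D|g|^2\,d\nu$ into the central disc and the annulus $\Gamma_h$, the covering argument showing that $\nu_{|\Gamma_h}$ has Carleson constant $\lesssim A(h):=\sup_{0\le t\le h}\rho_\nu(t)/t$ (that covering is exactly the reason the statement carries the supremum over $t\le h$ rather than $\rho_\nu(h)/h$), and the reduction $a_n(M_wC_\varphi)=a_n(J_\nu)$. The genuine gap is the step you dismiss as bookkeeping: passing from your estimate $(1-h)^{n-1}/\sqrt h+\sqrt{A(h)}$ to the stated $\e^{-nh}+\sqrt{A(h)}$. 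The only way to pay for the factor $1/\sqrt h$ with the exponential term is to evaluate your bound at some $h'\ge h+c\log(1/h)/n$; but $A$ is merely nondecreasing and has no doubling-type regularity (mass sitting at depth just beyond $h$ is invisible to every window of size $\le h$), so $A(h')$ is not controlled by $A(h)$, while any $h'\le h$ only worsens the exponential term. The two infima are in fact not comparable: take $\nu$ with atoms at points $(1-h_k)\xi_k$ of mass $\eps_k^2h_k$, with well-separated $\xi_k$ and $\eps_k\downarrow0$ extremely fast, so that $A(h)\approx\eps_k^2$ for $h_k\le h<h_{k-1}$; choosing $n_k$ with $\log(1/\eps_{k-1})\ll n_kh_{k-1}\ll\log(1/\eps_k)$, your right-hand side exceeds the stated one by a factor of order $h_{k-1}^{-1/2}\to\infty$, and no constant, even depending on $\nu$, absorbs this. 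So as written you prove a strictly weaker inequality, and the delicate point is not the covering (which is fine) but precisely this "harmless adjustment".

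To repair it you must handle the central part better than by the sup-norm/total-mass bound, which inevitably produces the $1/\sqrt h$. A standard device: factor the embedding attached to $\nu_{|\{|z|\le1-h\}}$ through the dilation $T_rf(z)=f(rz)$ with $r=(1-h)/(1-h/2)$. One has $a_{n+1}(T_r)=r^n\le\e^{-nh/2}$ exactly, and the push-forward of $\nu_{|\{|z|\le1-h\}}$ under $z\mapsto z/r$ is supported in $\{|w|\le1-h/2\}$ with Carleson constant $\lesssim$ that of $\nu$ (its windows of size $t\ge h/2$ pull back into windows of size $\le 3t$, and smaller windows miss the support); combining with your annular estimate via the orthogonal splitting of $L^2(\nu)$ gives $a_{n+1}(J_\nu)\lesssim\sqrt{K_\nu}\,\e^{-nh/2}+\sqrt{A(h)}$ with no loss in $h$. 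This suffices for the uses in Theorems~\ref{specif} and~\ref{specif-bis}, where a fixed constant inside the exponential is harmless; to obtain the literal exponent $\e^{-nh}$ of the statement, follow the proof of \cite[Theorem 5.1]{DDHL} as the paper does, with $\sqrt{\rho_\nu(h)/h}$ replaced by $\sup_{0\le t\le h}\sqrt{\rho_\nu(t)/t}$ exactly as in your covering step.
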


For the proof of Proposition~\ref{simp}, we refer to \cite[Theorem 5.1]{DDHL},  where the result is given only for composition operators, but working exactly 
the same for inclusions, except only that we have to replace the quantity $\sqrt{\rho_\nu (h) /h}$ by $\sup_{0 \leq t \leq h} \sqrt{\rho_\nu (t) / t}$. For the 
special case, just use that $\| J_\nu f \| = \| (M_w C_\varphi) f \|$ for all $f \in H^2$, so there exist two contractions $U \colon L^2 (\nu) \to H^2$ and 
$V \colon H^2 \to H^2$ such that $(M_w C_\phi) = U J_\nu$ and $J_\nu = V (M_w C_\phi)$, and hence $a_n (M_w C_\phi) = a_n (J_\nu)$.
%\medskip

%
\begin{proof} [Proof of Theorem~\ref{specif}] 

We use a construction made in \cite[Section 3.2]{LLQR-2008}.

Let $1 < \beta \leq 2$ and:
\begin{equation} 
u (t) = | \sin (t /2) |^\beta \, . 
\end{equation} 
There is an analytic function $U \colon \D \to \Pi^+ = \{ \Re z > 0 \}$ whose boundary values are:
\begin{equation} 
U^\ast (\e^{it}) = u (t) + i \, {\mathcal H} u (t) \, ,
\end{equation} 
where ${\mathcal H}$ is the Hilbert transform. The symbol $\phi$ is defined, for $z \in \D$, as:
\begin{equation} 
\phi (z) = \exp \big( - U (z) \big) \, .
\end{equation} 

By \cite[Lemma~3.6 and Lemma~4.3]{LLQR-2008}, the composition operator $C_\phi \colon H^2 \to H^2$ is not compact. 

Moreover, since $|\phi^\ast (\e^{it}) | = \exp \big( - |\sin (t / 2) |^\beta \big)$, we have:
\begin{displaymath} 
|\phi^\ast (\e^{it}) | \geq 1 - h \quad \Longleftrightarrow \quad |t| \leq \Big( \log \frac{1}{1 - h} \Big)^{1 / \beta} \approx h^{1 / \beta} \, ;
\end{displaymath} 
so, if $\Gamma_h$ is the annulus $ \{z \tq 1 - h \leq |z| < 1 \}$, and we set:
\begin{displaymath} 
F_k = {\phi^\ast}^{ - 1} (\Gamma_{2^{- k}}) \, ,
\end{displaymath} 
we have:
\begin{displaymath} 
\quad c_k := m (F_k) \approx 2^{- k / \beta} \, .
\end{displaymath} 

Now, let $\delta_k = \exp (- 2^{k / \beta} / k^2)$. We slightly modify the example of Theorem~\ref{lille1} as follows:
\begin{displaymath} 
\amphi_k = 
\left\{
\begin{array} {rcl}
\displaystyle \delta_k & \text{on} & F_k \, , \smallskip \\ 
1 \, & \text{on} & \T \setminus F_k \, .
\end{array}
\right.
\end{displaymath} 
Then, the series $\sum_{k \geq 1} c_k \log (1/\delta_k)$ converges since $c_k \log (1 / \delta_k) \lesssim 1 / k^2$. 
As in the proof of Theorem~\ref{lille1}, we can define an outer function $w$ such that $|w^\ast| = \prod_{k \geq 1} \amphi_k$. The same computation 
gives us, for any Carleson window $W (\xi, t)$ and for $\nu = \phi^\ast (|w^\ast|^2 m)$:
\begin{displaymath} 
\qquad \qquad \qquad \qquad \quad \nu [W (\xi, t)] \lesssim \delta_j^{\, 2} \, t \, , \quad \text{for} \quad 2^{- j - 1} \leq t < 2^{- j} \, . 
\end{displaymath} 

Let $0 < h < 1$ arbitrary. 

There exists an integer $l \geq 0$ such that $2^{- l - 1} \leq h < 2^{- l}$. Then for $0 < t \leq h$, we have 
$2^{- j - 1} \leq t < 2^{- j}$ for some $j \geq l$; hence:
\begin{displaymath} 
\frac{\rho_\nu (t)}{t} \lesssim \delta_j^{\, 2} \leq \delta_l^{\, 2} \, .
\end{displaymath} 
Therefore Proposition~\ref{simp} gives:
\begin{displaymath} 
a_n ( M_w C_\phi) \lesssim \inf _{l \in \N} (\e^{- n 2^{- l}} + \delta_l ) 
\lesssim \inf _{l  \geq 0} \big( \exp( - n 2^{- l}) + \exp ( - 2^{l /\beta} / l^2) \big) \, .
\end{displaymath} 

The choice $l = \Big[ \frac{\beta}{( \beta + 1) \log 2}\log n \Big]$ gives, for some $b > 0$:
\begin{displaymath} 
a_n ( M_w C_\phi) \lesssim \exp \big( - b \, n^{1 / (\beta + 1)} / (\log n)^2 \big) \, .
\end{displaymath} 

Now, if $0 < \gamma < 1/2$, we take $\beta$ such that $1 < \beta < \frac{1}{\gamma} - 1$ and $\beta \leq 2$. We obtain, with another $b > 0$:
\begin{displaymath} 
a_n ( M_w C_\phi) \lesssim \exp (- b \, n^\gamma ) \, ,
\end{displaymath} 
as claimed.
\end{proof}
\smallskip

\noindent {\bf Remark~1.} For $\beta < 1$, since we have $m_\phi (\Gamma_h) \approx h^{1/\beta}$, the composition operator $C_\phi$ is already 
compact. When $\beta = 1$, we have $m_\phi (\Gamma_h) \approx h$, but it can be checked that nevertheless $C_\phi$ is compact and 
$\rho_\phi (h) = {\rm O}\, \big(h / \log (1/h) \big)$ (see \cite[Remark~3, page 3117]{LLQR-2008}). Without doing that, we can use 
\cite[Theorem~4.1]{LLQR-2008} (which is an improvement of  \cite[Theorem~4.1]{LLQR-Mem}): there exists a compact composition operator with symbol 
$\tilde \phi$ such that $|{\tilde \phi}^\ast| = |\phi^\ast|$; therefore $m_{\tilde \phi} (\Gamma_h) = m_\phi (\Gamma_h) \approx h$.

For $\beta = 1$, the above proof only gives:
\begin{displaymath} 
a_n ( M_w C_\phi) \lesssim \exp \big( - b \, n^{1 / 2} / (\log n)^2 \big) \, .
\end{displaymath} 
Though in this case $C_\phi$ was already compact, that nevertheless allows to improve the compactness. 

\smallskip

\noindent {\bf Remark~2.} The case $\beta = 2$ corresponds to the simple symbol $\phi (z) = \frac{1 + z}{2}\,$. Indeed, we only used in our construction the 
modulus of the symbol and for this $\phi$, we have $|\phi^\ast (\e^{it}) | = |\cos (t/2)| \approx 1 - t^2/8 \approx \exp \big(-  |\sin (t/2 \sqrt{2})|^2 \big)$. 

We get the following result.
\begin{theorem} \label{specif-bis}
Let $\phi (z) = \frac{1 + z}{2}$. For each decreasing sequence $(\eps_k)$ of positive numbers such that $(\delta_k) = (2^{k/2} \eps_{2^k})$ is decreasing, 
there exist a weight $w \in H^\infty$ and a positive constant $b$ such that: 
\begin{displaymath} 
a_n (M_w C_\phi) \lesssim \exp \big( - b \, n^{1/3} \eps_n \big) \, .
\end{displaymath} 
\end{theorem}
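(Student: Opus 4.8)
The plan is to reuse, almost verbatim, the proof of Theorem~\ref{specif} in the case $\beta = 2$ (which corresponds to $\phi(z) = (1+z)/2$), merely changing the numbers $\exp(-2^{k/2}/k^2)$ imposed there on the sets $F_k$ into numbers built from $(\eps_k)$. First I would recall the facts special to this symbol: since $|\phi^\ast(\e^{it})| = |\cos(t/2)| = 1 - t^2/8 + {\rm O}(t^4)$ near $t = 0$, the sets $F_k := {\phi^\ast}^{-1}(\Gamma_{2^{-k}})$ satisfy $F_{k+1}\subseteq F_k$ and $c_k := m(F_k)\approx 2^{-k/2}$. Next, for any $w\in H^\infty$, writing $\nu := \phi^\ast(|w^\ast|^2 m)$ and $g_j := \sup_{F_j}|w^\ast|$, the computation in the proof of Theorem~\ref{specif} (namely ${\phi^\ast}^{-1}[W(\xi,t)]\subseteq F_j$ whenever $2^{-j-1}\le t<2^{-j}$, together with the fact that $m_\phi$ is a Carleson measure) gives $\rho_\nu(t)\lesssim g_j^2\,t$ on that range; as the $F_j$ decrease, $(g_j)$ is non-increasing, and Proposition~\ref{simp} then yields
\[
a_n(M_w C_\phi)\;\lesssim\;\inf_{j\ge1}\big(\e^{-n 2^{-j}} + g_j\big).
\]

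Now I would build the weight. Put $\delta_k := 2^{k/2}\eps_{2^k}$; by hypothesis $(\delta_k)$ is non-increasing, so $\eps_{2^k} = 2^{-k/2}\delta_k\le 2^{-k/2}\delta_1$ and therefore $\sum_{k\ge1}c_k\delta_k\approx\sum_{k\ge1}\eps_{2^k}\le\delta_1\sum_{k\ge1}2^{-k/2}<\infty$. Let $\amphi_k$ be $\e^{-\delta_k}$ on $F_k$ and $1$ on $\T\setminus F_k$, let $w_k$ be the outer function with $|w_k^\ast| = \amphi_k$, and set $w := \prod_k w_k$. The bound $|1 - w_k(z)|\le|\psi_k(z)|\le\delta_k c_k\,\frac{1+|z|}{1-|z|}$ from the proof of Theorem~\ref{lille1}, together with the convergence of $\sum_k\delta_k c_k$ just obtained, shows that the product converges uniformly on compact subsets of $\D$ to a zero-free $w\in H^\infty$ with $\|w\|_\infty\le1$, $w\not\equiv0$, and $|w^\ast| = \prod_k\amphi_k$ a.e.; in particular $M_w C_\phi$ is bounded. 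Since the $F_k$ decrease, any point of $F_j$ lies in $F_k$ for all $k\le j$, whence on $F_j$
\[
|w^\ast|\;\le\;\prod_{k\le j}\e^{-\delta_k}\;=\;\e^{-\sum_{k\le j}\delta_k}\;\le\;\e^{-\delta_j},
\]
so $g_j = \sup_{F_j}|w^\ast|\le\e^{-\delta_j}$.

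Then I would optimise. Combining the two displays, $a_n(M_w C_\phi)\lesssim\inf_{j\ge1}(\e^{-n 2^{-j}} + \e^{-\delta_j})$. For $n\ge8$ I take $j = j(n) := \lceil\frac23\log_2 n\rceil$, so that $n^{2/3}\le 2^j<2n^{2/3}\le n$. Then $n\,2^{-j}\ge\frac12 n^{1/3}$, while $2^{j/2}\ge n^{1/3}$ and, since $2^j\le n$ and $(\eps_k)$ is non-increasing, $\eps_{2^j}\ge\eps_n$; hence $\delta_j = 2^{j/2}\eps_{2^j}\ge n^{1/3}\eps_n$, and
\[
a_n(M_w C_\phi)\;\lesssim\;\e^{-n^{1/3}/2} + \e^{-n^{1/3}\eps_n}.
\]
Since $(\eps_k)$ is bounded by $\eps_1$, the choice $b := \min\{1,\,1/(2\eps_1)\}$ makes each summand at most $\e^{-b n^{1/3}\eps_n}$, so $a_n(M_w C_\phi)\lesssim\e^{-b n^{1/3}\eps_n}$ for $n\ge8$; enlarging the implied constant absorbs the finitely many $n<8$, since $M_w C_\phi$ is bounded.

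I expect the one genuinely delicate point to be this last optimisation: the index $j$ must be placed so that $n\,2^{-j}$ and $\delta_j$ are simultaneously at least of orders $n^{1/3}$ and $n^{1/3}\eps_n$. The balance $2^j\approx n^{2/3}$ works precisely because $n^{2/3}\le n$, so that, having chosen $2^j\le n$, the admissibility hypothesis that $(2^{k/2}\eps_{2^k})$ be non-increasing is exactly what lets one replace $\eps_{2^j}$ by the smaller $\eps_n$ in the lower bound for $\delta_j$. Everything else is a transcription of the proofs of Theorems~\ref{lille1} and~\ref{specif}.
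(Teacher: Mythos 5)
Your proof is correct and follows essentially the paper's own route: it is the construction of Theorem~\ref{specif} with the weight value on $F_k$ replaced by $\exp(-2^{k/2}\eps_{2^k})$, the hypothesis that $(2^{k/2}\eps_{2^k})$ decreases being used exactly as in the paper, namely to guarantee $\sum_k \eps_{2^k}<\infty$ (convergence of the outer product) and to replace $\eps_{2^j}$ by $\eps_n$ in the final optimization. The only difference is a harmless reparametrization: you keep annuli of width $2^{-k}$ and optimize at $2^{j}\approx n^{2/3}$, while the paper rescales $F_k$ to width $4^{-k/3}$ and takes $2^{l}\approx n$.
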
 
\begin{proof}
We only have to modify the proof of Theorem~\ref{specif}: we replace $F_k$ by:
\begin{displaymath}
F_k = {\phi^\ast}^{- 1} (\Gamma_{4^{k/3}})
\end{displaymath}
so $c_k = m (F_k) \approx 2^{- k/3}$, and we replace $\delta_k = \exp (- 2^{k / \beta} / k^2) = \exp ( - 2^{k / 2} / k^2)$ by:
\begin{displaymath} 
\delta_k = \exp ( - 2^{k / 3} \eps_{2^k}) \, ,
\end{displaymath} 
where $(\eps_k)_k$ is a given decreasing sequence of positive integers such that $(\delta_k)$ is decreasing. Note that, since $(\delta_k)$ is decreasing, we have 
$\eps_{2^k} \lesssim 2^{- k/2}$, so $\sum_k \eps_{2^k}  < \infty$. We get: 
\begin{displaymath} 
a_n (M_w C_\phi) \lesssim \inf_{l \geq 0} \big( \e^{- n 4^{- l/3}} + \e^{- 2^{l / 3} \eps_{2^l}} \big) \, ,
\end{displaymath} 
and, with $l = \big[\log n / \log 2 \big]$, we get, since $\eps_n \leq \eps_{2^l}$, for some $b > 0$:
\begin{displaymath} 
a_n (M_w C_\phi) \lesssim \exp \big( - b \, n^{1/3} \eps_n \big) \, . \qedhere
\end{displaymath} 
\end{proof}
\medskip

For example, with $\eps_k = 1/ (\log k)^2$, we get $a_n (M_w C_\phi) \lesssim \e^{( - b \, n^{1/3} / (\log n)^2 )} $.
\medskip

Theorem~\ref{specif-bis} improves a result of \cite[Theorem 2.3]{GDHL}, where for this symbol and a given $\alpha > 0$, weights $w$ are obtained such that:
\begin{displaymath} 
a_{n} (M_w C_\phi) \lesssim \bigg(\frac{\log n}{n}\bigg)^\alpha \, . 
\end{displaymath} 
%

 %%%%%%%%%%%%%%%%%%%%%%%%%%%%%%%%%%%%%%%%%%%%%%%%%%%%%%%%%%%%%%%%%%%%%%%%%%%
\section{Hilbert-Schmidt and Schatten regularizations} \label{HS et Schatten}

We begin with a characterization of the symbols that can give a Hilbert-Schmidt weighted composition operator. 

\begin{theorem}\label{lens} 
An analytic self-map $\phi \colon \D \to \D$ can induce a Hilbert-Schmidt weighted composition operator $M_w C_\phi$, for some weight $w \in H^2$, if 
and only if:
\begin{displaymath} 
\int_{\T} \log \Big(\frac{1}{1 - |\phi^\ast|}\Big) \, dm<+ \infty \, .
\end{displaymath} 
\end{theorem}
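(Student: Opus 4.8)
The computation of the Hilbert-Schmidt norm is the natural starting point. For $w \in H^2$ and the orthonormal basis $(z^n)_{n\geq 0}$ of $H^2$, one has $\| M_w C_\phi\|_{S_2}^2 = \sum_{n\geq 0} \| w\, \phi^n\|^2 = \sum_{n\geq 0}\int_\T |w^\ast|^2 |\phi^\ast|^{2n}\, dm = \int_\T \frac{|w^\ast|^2}{1 - |\phi^\ast|^2}\, dm$ (by monotone convergence; on the set $\{|\phi^\ast|=1\}$ the integrand is $+\infty$, which already forces $w^\ast=0$ there, consistent with Theorem~\ref{lille1}). So the question reduces to: when does there exist an outer (or merely $H^2$) function $w\not\equiv 0$ with $\int_\T \frac{|w^\ast|^2}{1-|\phi^\ast|^2}\, dm < \infty$? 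Writing $|w^\ast|^2 = g$ with $g\geq 0$, $\log g \in L^1(\T)$, the existence of such a $w$ amounts to the existence of a nonnegative $g$ with $\int_\T g/(1-|\phi^\ast|^2)\,dm<\infty$ and $\int_\T \log g\, dm > -\infty$.

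\textbf{Sufficiency.} Assume $\int_\T \log\frac{1}{1-|\phi^\ast|}\, dm < +\infty$, equivalently $\int_\T \log\frac{1}{1-|\phi^\ast|^2}\, dm < +\infty$ (the two differ by $\int\log\frac{1}{1+|\phi^\ast|}$, which is bounded). The natural choice is to take the weight whose modulus essentially cancels the singularity: set $g = 1-|\phi^\ast|^2$, i.e. let $w$ be the outer function with $|w^\ast|^2 = 1-|\phi^\ast|^2$ a.e. This is legitimate precisely because $\log g = \log(1-|\phi^\ast|^2) \in L^1(\T)$ by hypothesis (and $\log g \le 0$, so $g\in L^1$ automatically), hence the outer function $w = \exp\big(\tfrac12\int_\T \frac{e^{it}+z}{e^{it}-z}\log(1-|\phi^\ast(e^{it})|^2)\,dm(t)\big)$ is a well-defined element of $H^2$, indeed of $H^\infty$ since $|w|\le 1$, and $w\not\equiv 0$ since it is outer. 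Then $\int_\T \frac{|w^\ast|^2}{1-|\phi^\ast|^2}\, dm = \int_\T 1\, dm = 1 < \infty$, so $M_w C_\phi$ is Hilbert-Schmidt (and bounded, $w$ being in $H^\infty$).

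\textbf{Necessity.} Conversely, suppose $M_w C_\phi$ is Hilbert-Schmidt for some $w\in H^2$, $w\not\equiv0$. By the norm formula, $\int_\T \frac{|w^\ast|^2}{1-|\phi^\ast|^2}\, dm < \infty$; in particular $|\phi^\ast|<1$ a.e. on $\{w^\ast\neq 0\}$, and since $w\in H^2$, $w\not\equiv 0$, we have $w^\ast\neq0$ a.e. on $\T$ (F. and M. Riesz / log-integrability of $|w^\ast|$), so $|\phi^\ast|<1$ a.e. Write $a = 1-|\phi^\ast|^2 \in (0,1]$ a.e. We must show $\int_\T \log\frac1a\, dm < \infty$, i.e. $\int_\T \log a\, dm > -\infty$. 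From $\int_\T \frac{|w^\ast|^2}{a}\, dm < \infty$ we get $\log\frac{|w^\ast|^2}{a}\in L^1$ is \emph{not} immediate, but: the function $\frac{|w^\ast|^2}{a}$ is in $L^1(\T)$ and is $\geq 0$; by concavity of $\log$ (Jensen, or just $\log x \le x - 1$), $\int_\T \log\frac{|w^\ast|^2}{a}\, dm \le \log\int_\T\frac{|w^\ast|^2}{a}\,dm < \infty$. On the other hand $\int_\T \log\frac{|w^\ast|^2}{a}\, dm = \int_\T \log|w^\ast|^2\, dm - \int_\T \log a\, dm = 2\log|w(0)|_{\text{outer part}} \ldots$ — more carefully, since $w\in H^2$, $w\not\equiv0$, we have $\int_\T \log|w^\ast|\, dm > -\infty$ (it equals $\log|w(0)|$ if $w(0)\neq0$, and in general is finite). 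Hence $-\int_\T\log a\, dm = \int_\T\log\frac{|w^\ast|^2}{a}\,dm - \int_\T\log|w^\ast|^2\,dm$, both terms on the right being finite (the first bounded above by the Jensen estimate, the second finite), so we need only worry that $\int\log\frac{|w^\ast|^2}{a}$ is not $-\infty$; but $\log a \le 0$ gives $\log\frac{|w^\ast|^2}{a} \ge \log|w^\ast|^2 \in L^1$, so it is indeed finite, and therefore $\int_\T\log a\, dm$ is finite, i.e. $\int_\T\log\frac{1}{1-|\phi^\ast|^2}\,dm<\infty$, which is equivalent to the asserted condition.

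\textbf{Expected main obstacle.} The analytic content is light; the only points requiring care are the bookkeeping with possibly infinite integrals in the necessity direction — one must split $\log\frac{|w^\ast|^2}{a} = \log|w^\ast|^2 - \log a$ as a difference of a bounded-below $L^1$ function and a nonpositive function, and invoke $\log|w^\ast|\in L^1$ (valid since $w$ is a nonzero $H^2$ function) so that no $\infty - \infty$ arises — and the elementary equivalence $\int\log\frac1{1-|\phi^\ast|}\,dm<\infty \iff \int\log\frac1{1-|\phi^\ast|^2}\,dm<\infty$, which follows from $0 \le \log\frac{1}{1-|\phi^\ast|^2} - \log\frac1{1-|\phi^\ast|} = \log\frac1{1+|\phi^\ast|} \le \log 2$. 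A secondary point is confirming boundedness of $M_w C_\phi$ in the sufficiency part, which is free since the constructed $w$ lies in $H^\infty$.
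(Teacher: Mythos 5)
Your proof is correct and follows essentially the same route as the paper: sufficiency by taking the outer weight whose squared modulus cancels the singularity ($|w^\ast|^2=1-|\phi^\ast|^2$ for you, $1-|\phi^\ast|$ in the paper — an immaterial difference), and necessity by computing the Hilbert--Schmidt norm on the basis $(z^n)$ and extracting log-integrability of $1-|\phi^\ast|^2$ from $\int_\T |w^\ast|^2(1-|\phi^\ast|^2)^{-1}\,dm<\infty$ together with $\log|w^\ast|\in L^1(\T)$, which is exactly the content of the paper's Lemma~\ref{lemme utile} (your Jensen/$\log x\le x-1$ bookkeeping is the same estimate, just inlined). The only nitpick is that $\int_\T\log|w^\ast|\,dm$ equals $\log|w(0)|$ only when $w$ is outer (in general one has $\ge$), but you only need finiteness, which holds for any nonzero $H^2$ function.
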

\begin{proof}
That the condition is sufficient is proved in \cite[Proposition~2.5]{DHL}. For sake of completeness, we recall the argument. 

The hypothesis implies that there exists an outer function $w$ on $\D$ such that $|w^\ast|^2 = 1  - |\phi^\ast|$. Then, writing $T = M_w C_\phi$, we have:
\begin{displaymath} 
\sum_{n = 0}^\infty \| T (z^n) \|^2 = \sum_{n = 0}^\infty \int_\T (1 - |\phi^\ast|) |\phi^\ast|^{2 n} \, dm 
= \int_\T \frac{1}{1 + |\phi^\ast|} \, dm < + \infty \, ,
\end{displaymath} 
and $T$ is Hilbert-Schmidt, as claimed.
\smallskip 

Let us prove the necessity of the condition. 

If $w \in H^2$ exists such that $M_w C_\phi \colon H^2 \to H^2$ is Hilbert-Schmidt, we have in particular $|\phi^\ast| < 1$ $m$-almost everywhere, by 
the easy part of Theorem~\ref{lille1}. Since $M_w C_\phi$ is Hilbert-Schmidt, we have:
\begin{displaymath} 
\sum_{n = 0}^\infty \| w \, \phi^n \|^2 = \sum_{n = 0}^\infty \| (M_w C_\phi) (z^n) \|^2 < \infty \, ,
\end{displaymath} 
i.e.:
\begin{displaymath} 
\int_{\T} |w^\ast|^2 \, \frac{1}{1 - |\phi^\ast|^2} \, dm < \infty \, . 
\end{displaymath} 

The following lemma, with $u = |w^\ast|^2$, $v = 1 - |\phi^\ast|^2$ and $\alpha = 1$, then shows that 
$\int_\T \log \frac{1}{1 - |\phi^\ast|^2} \, d m < \infty$. In fact, since $w \in H^2$ and $w \not\equiv 0$, Jensen's inequality tells that the first condition of 
that lemma is satisfied. 
\end{proof}
\begin{lemma} \label{lemme utile}
Let $(\Omega, \nu)$ be a measure space and $u, v \colon \Omega \to (0, 1]$ measurable functions such that, for some $\alpha > 0$:
\begin{displaymath} 
\int_\Omega |\log u| \, d\nu < \infty  \qquad  \text{and} \qquad \int_\Omega u v^{- \alpha} \, d\nu < \infty \, .
\end{displaymath} 
Then $\displaystyle \int_\Omega |\log v |\, d\nu < \infty$.
\end{lemma}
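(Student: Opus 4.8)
\textbf{Proof plan for Lemma~\ref{lemme utile}.}

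The plan is to exploit the elementary pointwise inequality $|\log v| = -\log v \le C_\alpha\, v^{-\alpha}$, valid for all $v \in (0,1]$ with $C_\alpha = 1/(e\alpha)$ (the function $t \mapsto t^\alpha \log(1/t)$ is bounded on $(0,1]$), and to combine it with the hypothesis $\int_\Omega u v^{-\alpha}\, d\nu < \infty$. The obstacle is that this gives control of $\int v^{-\alpha}\, d\nu$ only where $u$ is bounded below; on the set where $u$ is small, $v^{-\alpha}$ could in principle be huge. So I would split $\Omega$ into $A = \{u \ge 1/2\}$ (say) and $B = \{u < 1/2\}$ and treat the two pieces separately.

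On $A$, since $u \ge 1/2$ we have $v^{-\alpha} \le 2\, u v^{-\alpha}$, hence $\int_A |\log v|\, d\nu \le C_\alpha \int_A v^{-\alpha}\, d\nu \le 2 C_\alpha \int_\Omega u v^{-\alpha}\, d\nu < \infty$. On $B$, the point is that $v^{-\alpha}$ need not be integrable there, but $|\log v|$ still is: I would further decompose $B = B_1 \cup B_2$ where $B_1 = B \cap \{v^{-\alpha} \le 1/u\}$ and $B_2 = B \cap \{v^{-\alpha} > 1/u\}$. On $B_1$ we have $v^{-\alpha} \le 1/u$, i.e. $|\log v| \le C_\alpha/u$, which is not obviously integrable; a cleaner split is to compare $|\log v|$ with $|\log u|$ directly. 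Precisely, on $B_2 = \{v^{-\alpha} > 1/u\}$ we get $u v^{-\alpha} > 1$, so $\nu(B_2) \le \int_{B_2} u v^{-\alpha}\, d\nu < \infty$; but we also need $\int_{B_2}|\log v|\, d\nu < \infty$, and on this set $|\log v| = -\log v$ could be large. The resolution: on $B_2$ use that $v^{-\alpha} > 1/u$ means $\alpha|\log v| = \log(v^{-\alpha})$... this still does not immediately close. The correct dichotomy is instead $B' = B \cap \{v \ge u^{1/\alpha}\}$ and $B'' = B \cap \{v < u^{1/\alpha}\}$. On $B'$, $|\log v| = -\log v \le -\tfrac{1}{\alpha}\log u = \tfrac{1}{\alpha}|\log u|$, which is integrable by the first hypothesis. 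On $B''$, $v < u^{1/\alpha}$ gives $v^{-\alpha} > 1/u$, hence $u v^{-\alpha} > 1$, so $\int_{B''} v^{-\alpha}\, d\nu \le \int_{B''} u v^{-2\alpha}\, d\nu$—wrong direction again.

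Let me restart the $B''$ estimate with the right comparison: on $B'' = \{v < u^{1/\alpha}\}$ we have $1 < u v^{-\alpha}$, so $v^{-\alpha} < u v^{-2\alpha}$; that is unhelpful, but note $|\log v| \le C_{\alpha}\, v^{-\alpha} \le C_\alpha\, u v^{-\alpha} \cdot (1/u)^{?}$—no. The clean fact is simply: on $B''$, $u v^{-\alpha} \ge 1$, so $v^{-\alpha/2} \le v^{-\alpha/2}\sqrt{u v^{-\alpha}} \cdot$ ... Instead I would use $|\log v| \le C_{\alpha/2}\, v^{-\alpha/2}$ and, on $B''$, $v^{-\alpha/2} = v^{-\alpha}\cdot v^{\alpha/2} \le v^{-\alpha} \le u v^{-\alpha} \cdot u^{-1}$—still the stray $u^{-1}$. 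The honest way: on $B''$ we have both $uv^{-\alpha}>1$ and $u<1/2$, so write $|\log v| \le C_\alpha v^{-\alpha} = C_\alpha (u v^{-\alpha})\, u^{-1}$; since $u^{-1} \le e^{2|\log u|}$... this is getting circular. The genuinely correct argument, which I will write up, is: $-\log v \le -\tfrac1\alpha \log u + (-\log v + \tfrac1\alpha\log u)$ and on $B''$ the bracket is $\le 0$; on $B'$ it is $\le -\log v \le C_\alpha v^{-\alpha} \le 2C_\alpha u v^{-\alpha}$ since there $u \ge v^\alpha$... wait on $B'$, $v \ge u^{1/\alpha}$ so $v^\alpha \ge u$, giving $v^{-\alpha} \le 1/u$, hence $u v^{-\alpha} \le 1$, so $v^{-\alpha} \le (uv^{-\alpha})^{1/2} u^{-1/2}$—no. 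The cleanest: $\int_\Omega |\log v|\, d\nu \le \int_\Omega \max(\tfrac1\alpha|\log u|,\ C_\alpha v^{-\alpha})\, d\nu \le \tfrac1\alpha\int|\log u| + C_\alpha\int v^{-\alpha}\mathbf{1}_{\{v^{-\alpha} > |\log u|/(\alpha C_\alpha)\}}$, and on that last set $v^{-\alpha}$ is large; combined with $v < u^{1/\alpha}$ forcing $uv^{-\alpha}<1$ there—so $\int v^{-\alpha}\mathbf 1 \le \int (uv^{-\alpha})^{1/2}(v^{-\alpha})^{1/2}$, and Cauchy–Schwarz bounds this by $(\int uv^{-\alpha})^{1/2}(\int v^{-\alpha}\mathbf 1)^{1/2}$, i.e. $\int v^{-\alpha}\mathbf 1 \le \int u v^{-\alpha} < \infty$. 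That closes it.

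Summarizing the write-up I will produce: fix $C_\alpha$ with $-\log t \le C_\alpha t^{-\alpha}$ on $(0,1]$; set $A = \{v \ge u^{1/\alpha}\}$ and $B = \{v < u^{1/\alpha}\}$; on $A$, $-\log v \le -\tfrac1\alpha\log u = \tfrac1\alpha|\log u|$, integrable by the first hypothesis; on $B$, $v^{-\alpha} < u^{-1}$, equivalently $uv^{-\alpha} < 1$, and $-\log v \le C_\alpha v^{-\alpha}$, so $\int_B (-\log v)\, d\nu \le C_\alpha \int_B v^{-\alpha}\, d\nu \le C_\alpha \int_B (uv^{-\alpha})^{1/2}(v^{-\alpha})^{1/2}\, d\nu$, and Cauchy–Schwarz plus $uv^{-\alpha}\le 1$ on $B$ gives $\int_B v^{-\alpha}\, d\nu \le \int_B uv^{-\alpha}\, d\nu \le \int_\Omega uv^{-\alpha}\, d\nu < \infty$. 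Adding the two contributions yields $\int_\Omega |\log v|\, d\nu < \infty$. The only delicate point is the Cauchy–Schwarz step on $B$, which needs the inequality $uv^{-\alpha}\le 1$ there to absorb one factor; everything else is routine.
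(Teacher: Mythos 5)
Your split into $A=\{v\ge u^{1/\alpha}\}$ and $B=\{v<u^{1/\alpha}\}$ handles $A$ correctly ($-\log v\le\frac1\alpha|\log u|$ there), but your treatment of $B$ has a genuine gap. First, the inequality you assert on $B$ is reversed: $v<u^{1/\alpha}$ means $v^\alpha<u$, hence $v^{-\alpha}>u^{-1}$ and $uv^{-\alpha}>1$ on $B$ (it is on $A$ that $uv^{-\alpha}\le1$ holds). Second, the pointwise step $v^{-\alpha}\le(uv^{-\alpha})^{1/2}(v^{-\alpha})^{1/2}=u^{1/2}\,v^{-\alpha}$ requires $u\ge1$, which contradicts $u\le1$ except where $u=1$. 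Most importantly, the claim your chain is driving at, namely $\int_B v^{-\alpha}\,d\nu<\infty$, is simply false under the hypotheses: take $\Omega=(0,1/2)$ with Lebesgue measure, $\alpha=1$, $v(t)=t$, $u(t)=\sqrt t$; then $\int_\Omega|\log u|\,d\nu<\infty$ and $\int_\Omega uv^{-\alpha}\,d\nu=\int_0^{1/2}t^{-1/2}\,dt<\infty$, while $B=(0,1/2)$ and $\int_B v^{-\alpha}\,d\nu=\int_0^{1/2}t^{-1}\,dt=\infty$. So no Cauchy--Schwarz manipulation can close the argument that starts from $-\log v\le C_\alpha v^{-\alpha}$ on $B$: the power $v^{-\alpha}$ need not be integrable there; only its logarithm is controlled.

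What does work on $B$ --- in fact on all of $\Omega$, making the split unnecessary --- is the paper's one-line argument: since $v\le1$, one has $\alpha|\log v|=\log(v^{-\alpha})=\log(uv^{-\alpha})+\log(1/u)\le\log^+(uv^{-\alpha})+|\log u|\le uv^{-\alpha}+|\log u|$, using $\log^+t\le t$; both terms on the right are integrable by hypothesis, and the conclusion follows. Your estimate on $A$ is exactly the $\log(1/u)$ term of this inequality; the missing idea is to bound the \emph{logarithm} of $uv^{-\alpha}$ by $uv^{-\alpha}$ itself, rather than to bound $|\log v|$ by the (in general non-integrable) power $v^{-\alpha}$.
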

\begin{proof}
If we set $g = v^{- \alpha}$ and $f = u v^{- \alpha}$, we have:
\begin{displaymath} 
0 \leq \log g = \log f + \log \frac{1}{u} \leq \log^{+} f + | \log u | \leq f  + | \log u | \, .
\end{displaymath} 
By hypothesis, $f$ (which is positive) and $|\log u|$ are integrable; hence $\log g$ is integrable and:
\begin{displaymath} 
\int_{\T} |\log v | \, d \nu < \infty \, . \qedhere
\end{displaymath} 
\end{proof}

In Theorem~\ref{lens}, we showed that for the outer function $w$ such that $|w^\ast|^2 = 1 - |\phi^\ast|$, the weighted composition operator $M_w C_\phi$ 
is Hilbert-Schmidt. For this weight, we cannot expect better in general, as said by the following theorem.

\begin{theorem} \label{HS pas S_p}
There exist a symbol $\phi$ satisfying $\int_\T \log (1 - |\phi^\ast|) \, dm > - \infty$ such that, if  $w$ is any outer function satisfying 
$|w^\ast| = 1 - |\phi^\ast|$, the weighted composition operator $M_w C_\phi$ is Hilbert-Schmidt, but $M_w C_\phi \notin S_p$, for all $p < 2$.
\end{theorem}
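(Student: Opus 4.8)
The plan is to choose a symbol $\phi$ whose modulus $|\phi^\ast|$ approaches $1$ on $\T$, but only very slowly — slowly enough that $\int_\T \log(1-|\phi^\ast|)\,dm > -\infty$, yet fast enough (and over a large enough set) that the resulting measure $\nu = \phi^\ast(|w^\ast|^2 m)$ with $|w^\ast| = 1-|\phi^\ast|$ fails Luecking's Schatten test for every $p < 2$. The natural candidate is again a symbol of the form $\phi(z) = \exp(-U(z))$ as in the proof of Theorem~\ref{specif}, so that $|\phi^\ast(\e^{it})| = \exp(-u(t))$ for a suitable nonnegative $u$ vanishing at $t=0$; here, however, I would take $u$ to decay essentially exponentially, $u(t) \approx \e^{-1/|t|}$ or similar near $0$, rather than polynomially. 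Then $1 - |\phi^\ast(\e^{it})| \approx u(t)$ near $t = 0$, and the condition $\int_\T \log(1-|\phi^\ast|)\,dm > -\infty$ becomes $\int_0 \log(1/u(t))\,dt < \infty$, i.e. $\int_0 \frac{dt}{|t|} \cdot (\text{something summable})$, which forces $u$ to decay only slightly faster than $\e^{-1/|t|}$; one tunes the exponent to sit just on the convergent side.

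First I would record, for this $\phi$, the estimate on the pull-back measure $m_\phi$ of the Carleson windows (or Hastings--Luecking boxes) $W_{n,j}$: since $\phi^\ast$ maps the arc $\{|t| \le \tau\}$ into the annulus $\Gamma_{u(\tau)}$, one gets $m_\phi(\Gamma_h) = m(\{1-|\phi^\ast|\le h\}) \approx m(\{u(t)\lesssim h\}) \approx 1/\log(1/h)$ for the choice $u(t)\approx \e^{-1/|t|}$. The windows $W_{n,j}$ that actually meet the image of $\phi^\ast$ are those near the point $\phi^\ast(1) $ (the image of $t=0$); a box $\tilde W_{n,j}$ at ``radial depth'' $2^{-n}$ and near that point pulls back to an arc of length comparable to $1/\log 2^n \approx 1/n$ for the first few $j$, hence $m_\phi(\tilde W_{n,j}) \approx 1/n$ there. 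Then the weight contributes a factor $|w^\ast|^2 \approx (1-|\phi^\ast|)^2 \approx 2^{-2n}$ on the preimage of a depth-$2^{-n}$ box, so $\nu(\tilde W_{n,j}) \approx 2^{-2n}/n$ for the relevant $j$'s — and there are $O(1)$ such $j$'s at each level $n$ (the image of $\phi^\ast$ near $t=0$ occupies only a bounded number of boxes per dyadic level, since the arc it fills has length $\approx 1/n$ while each box subtends length $2\pi\cdot 2^{-n}$, so actually even fewer; one must track this carefully). Plugging into Luecking: $\sum_{n,j}[2^n\nu(\tilde W_{n,j})]^{p/2} \approx \sum_n [2^n \cdot 2^{-2n}/n]^{p/2} = \sum_n [2^{-n}/n]^{p/2}$, which \emph{converges} for all $p>0$ — so that naive computation is too crude and I would need a symbol that spreads mass over \emph{many} boxes per level.

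The correct construction therefore should make the set $\{u(t)\approx 2^{-n}\}$, i.e. $\{|t|\approx 1/(n\log 2)\}$, have pull-back that is \emph{distributed} across $\asymp 2^n/n$ distinct boxes at level $n$ rather than concentrated. This is achieved by composing with an inner-function-type twist, or more simply by replacing the single ``bump at $t=0$'' by a symbol whose boundary map is, say, $\phi^\ast(\e^{it}) = \e^{it}\exp(-u(t))$ or by using a lens-type / cusp map so that $\phi^\ast$ winds around $\T$ while its modulus sweeps through all values in $[1-2^{-n},1-2^{-n-1}]$ over a $t$-set of measure $\approx 1/n^2$, hitting $\approx N_n$ boxes at level $n$ with $\nu$-mass $\approx 2^{-2n}/(n^2 N_n)$ each. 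Then Luecking's sum at level $n$ is $N_n[2^n\cdot 2^{-2n}/(n^2N_n)]^{p/2} = N_n^{1-p/2} 2^{-np/2} n^{-p}$, and for $p < 2$ the exponent $1-p/2 > 0$ makes $N_n^{1-p/2}$ a genuine gain: choosing $N_n$ to grow like $2^{n(2-p_0)/(2-p_0/2)}$... — more cleanly, one picks the geometry so that for a fixed target $p_0 < 2$ the level-$n$ contribution is $\gtrsim 1/n$ (harmonic, divergent) while at $p = 2$ it stays summable because the $N_n^{1-p/2}$ factor degenerates to $1$. I expect the main obstacle to be precisely this bookkeeping: arranging a single symbol $\phi$ for which, simultaneously, (i) $\int_\T\log(1/(1-|\phi^\ast|))\,dm<\infty$, (ii) $C_\phi$ is well-defined (automatic) and the outer $w$ with $|w^\ast|=1-|\phi^\ast|$ gives $M_wC_\phi$ Hilbert--Schmidt (already known from Theorem~\ref{lens}), and (iii) the Luecking sum diverges for \emph{every} $p<2$ at once — which requires the divergence to be ``polynomially robust'' in $p$ near $2$, forcing a careful choice of how fast $N_n\to\infty$ versus how the $\nu$-mass per box decays. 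Once the counting function $N_n$ and the modulus profile $u$ are pinned down, verifying (i) is an elementary integral and verifying (iii) is a direct substitution into Theorem~\ref{Theo Luecking} (in its $W_{n,j}$ form), with $\| M_wC_\phi\|_{S_2}$ computed as in the first half of the proof of Theorem~\ref{lens}.
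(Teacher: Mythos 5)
You end up on a genuinely different road from the paper, and the fork is the weight normalization together with one elementary inequality you did not use. The paper's proof (like that of Theorem~\ref{lens}) takes the outer weight with $|w^\ast|^2 = 1-|\phi^\ast|$, so that $d\nu(z) = (1-|z|)\,dm_\phi(z)$ and $2^n\nu(\tilde W_{n,j}) \approx m_\phi(\tilde W_{n,j})$; you computed with the literal $|w^\ast| = 1-|\phi^\ast|$, which inserts an extra factor $2^{-n}$ at level $n$ and makes divergence impossible no matter what $\phi$ is: then $\sum_j[2^n\nu(\tilde W_{n,j})]^{p/2} \lesssim 2^{-np/2}\,2^{n(1-p/2)} = 2^{n(1-p)}$, summable for every $p>1$. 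The second point is the paper's key (one-line) lower bound: since $p/2\le 1$, $\sum_j x_j^{p/2} \ge \big(\sum_j x_j\big)^{p/2}$, so no angular spreading is invoked at all; the paper keeps a purely radial example and bounds the Luecking sum below by $\sum_n \big[2^n\nu(\tilde\Gamma_{2^{-n}})\big]^{p/2} \approx \sum_n m_\phi(\tilde\Gamma_{2^{-n}})^{p/2}$, choosing $1-|\phi^\ast(\e^{it})| = \exp(-\e^{1/|t|})$ so that $m_\phi(\tilde\Gamma_h) \approx \frac{1}{(\log 1/h)(\log\log 1/h)^2}$, which diverges for every $p<2$ and converges for $p=2$. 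Note also that your tuning remark goes the wrong way: to keep $\int_\T\log\frac{1}{1-|\phi^\ast|}\,dm<\infty$ you need $1-|\phi^\ast|$ to decay slightly \emph{slower} than $\e^{-1/|t|}$ (e.g. $\exp\big(-\frac{1}{|t|(\log 1/|t|)^2}\big)$), not faster.

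As written, your text is a plan rather than a proof: no symbol is exhibited, the box counts $N_n$ and per-box masses are never pinned down, and you yourself flag the simultaneous verification of the log condition and of divergence for all $p<2$ as open. That is the genuine gap. Having said that, your suspicion that the radial picture is in tension with the hypothesis is substantive and deserves to be recorded: if $\int_\T\log\frac{1}{1-|\phi^\ast|}\,dm<\infty$, i.e. $\sum_n n\, m_\phi(\tilde\Gamma_{2^{-n}})<\infty$, then H\"older gives $\sum_n m_\phi(\tilde\Gamma_{2^{-n}})^{p/2}<\infty$ for every $p>1$, so the purely radial lower bound cannot reach the range $1<p<2$ under the stated hypothesis; and indeed for the paper's concrete profile one has $\log\frac{1}{1-|\phi^\ast(\e^{it})|} = \e^{1/|t|}$, which is not integrable on $\T$, so that example does not satisfy $\int_\T\log(1-|\phi^\ast|)\,dm>-\infty$ and the outer weight does not exist for it. So proving the statement exactly as stated does seem to require the kind of angular spreading you sketch (full shells of $\approx 2^n$ boxes with shell masses $\approx n^{-2}(\log n)^{-2}$ meet the log condition and make the level-$n$ Luecking block $\approx 2^{n(1-p/2)}n^{-p}(\log n)^{-p}$, divergent for all $p<2$, summable at $p=2$) — but you would still have to construct such a symbol explicitly and carry out the verification, which your proposal does not do.
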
 
\begin{proof}
Let, for $|t| \leq \pi$:
\begin{displaymath} 
u (t) = 1 - \exp ( - \, \e^{1 / |t|}) 
\end{displaymath} 
We have $0 < 1 - \exp (- \, \e^{1/\pi}) \leq u (t) \leq 1$; hence $\int_{- \pi}^\pi \log u (t) \, dt > - \infty$; therefore there is an outer function 
$\phi \in H^\infty$ such that $|\phi^\ast (\e^{it})| = u (t)$. 

Moreover, we also have $\int_\T \log (1 - |\phi^\ast|) \, dm = \int_{- \pi}^\pi \log \big(1 - u (t) \big) \, dt > - \infty$. Hence if $w$ is an outer function such 
that $|w^\ast|^2 = 1 - |\phi^\ast|$, the weighted composition operator $M_w C_\phi$ is Hilbert-Schmidt. We are going to show that $M_w C_\phi$ 
does not belong to any Schatten class for $p < 2$. 

For that, we use Theorem~\ref{Theo Luecking}. The weighted composition operator $M_w C_\phi$ can be viewed as an inclusion  
$J_\nu \colon H^2 \to L^2 (\nu)$, where $\nu = \phi^\ast (|w^\ast|^2 m)$. Here, we also have $d\nu (z) = (1 - |z|) \, d m_\phi (z)$. 

Since $p < 2$, we have:
\begin{displaymath} 
\sum_{j = 0}^{2^n - 1} [2^n \nu ({\tilde W}_{n, j}) ]^{p / 2} \geq \bigg(  \sum_{j = 0}^{2^n - 1} 2^n \nu ({\tilde W}_{n, j}) \bigg)^{p / 2} 
= [2^n \nu ({\tilde \Gamma}_{2^{ n}}) ]^{p / 2} \, ,
\end{displaymath} 
where ${\tilde \Gamma}_h = \{ z \in \D \tq 1 - h \leq |z| \leq 1 - h / 2 \}$. 

But $\nu ({\tilde \Gamma}_{2^{- n}})\approx  2^{- n} \, m_\phi ({\tilde \Gamma}_{2^{- n}})$ and 
\begin{displaymath} 
m_\phi ({\tilde \Gamma}_h) \approx \frac{1}{(\log 1 / h) (\log \log 1 / h)^2} \, \cdot
\end{displaymath} 
In fact, we have $\phi^\ast (\e^{it}) \in {\tilde \Gamma}_h$ if and only if $h / 2 \leq \exp ( - \, \e^{1 / |t|}) \leq h$, which is equivalent to:
\begin{displaymath} 
\frac{1}{\log \log 2 / h} \leq |t| \leq \frac{1}{\log \log 1 / h}
\end{displaymath} 
and:
\begin{align*} 
\frac{1}{\log \log 1 / h} - \frac{1}{\log \log 2 / h} 
& \approx \frac{1}{(\log \log 1 / h)^2}\, \log \bigg( 1 + \frac{\log 2}{\log 1 / h} \bigg) \\
& \approx \frac{1}{(\log 1 / h) (\log \log 1 / h)^2} \, \cdot 
\end{align*} 
Hence: 
\begin{displaymath} 
2^n \nu ({\tilde \Gamma}_{2^{ n}})  \gtrsim \frac{1}{n \, (\log n)^2} 
\end{displaymath} 
and we obtain:
\begin{displaymath} 
\sum_{n = 0}^{+\infty} \sum_{j = 0}^{2^n - 1} [2^n \nu ({\tilde W}_{n, j}) ]^{p / 2} 
\gtrsim \sum_{n = 0}^{+\infty} \frac{1}{n^{p / 2} \, (\log n)^p} = \infty \, ,
\end{displaymath} 
since $p / 2 < 1$. Luecking's theorem tells that $M_w C_\phi \notin S_p$.
\end{proof}
\medskip
If Theorem~\ref{HS pas S_p} does not allow to have better than Hilbert-Schmidt with the same weight, an improvement is possible by taking another weight.
\goodbreak

\begin{theorem} \label{Schatten p < 2}
Assume that the composition operator $C_\phi$ can induce a Hilbert-Schmidt weighted composition operator. Then there exists another weight $w \in H^2$ 
such that $M_w C_\phi \in S_p$ for every $p < 2$.
\end{theorem}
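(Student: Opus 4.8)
The plan is to apply Luecking's theorem (Theorem~\ref{Theo Luecking}) to a carefully calibrated outer weight whose modulus decays very fast on the sets where $|\phi^\ast|$ is close to $1$, using up exactly the slack afforded by the hypothesis. By Theorem~\ref{lens}, the assumption that $C_\phi$ can induce a Hilbert--Schmidt weighted composition operator means that $\int_\T\log\frac1{1-|\phi^\ast|}\,dm<\infty$. Putting $E_0=\{|\phi^\ast|<1/2\}$, $E_k=\{1-2^{-k}\le|\phi^\ast|<1-2^{-k-1}\}$ for $k\ge1$, and $\sigma_k=m(E_k)$, one checks immediately that this is equivalent to $\sum_{k\ge1}k\sigma_k<\infty$; in particular $\sum_k\sigma_k<\infty$ and $m(\{|\phi^\ast|=1\})=0$.

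To build the weight: since $\sum_k k\sigma_k<\infty$, there is (by a standard fact, via the tail of the series) a non-decreasing sequence $\omega_k\ge1$ with $\omega_k\to\infty$ and $\sum_k k\sigma_k\,\omega_k<\infty$. Set $\theta_0=1$, $\theta_k=\e^{-k\omega_k}$ for $k\ge1$, and $g=\sum_{k\ge0}\theta_k\,\ind_{E_k}$ on $\T$. Then $0<g\le1$ and $\int_\T\log(1/g)\,dm=\sum_k\sigma_k\log(1/\theta_k)=\sum_k k\sigma_k\omega_k<\infty$, so there is an outer function $w$ with $|w^\ast|=g$; it belongs to $H^\infty$ with $\|w\|_\infty\le1$, is $\not\equiv0$, and $M_wC_\phi$ is bounded. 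Let $\nu=\phi^\ast(|w^\ast|^2m)$; since $m(\{|\phi^\ast|=1\})=0$, $\nu$ is carried by $\D$, and $M_wC_\phi$ has the same singular numbers as $J_\nu\colon H^2\to L^2(\nu)$ (argue as for Proposition~\ref{simp}), so it suffices to verify Luecking's criterion for $\nu$ and each fixed $p<2$.

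The core computation is the estimate of $\sum_n\sum_j[2^n\nu(\tilde W_{n,j})]^{p/2}$. For fixed $n$, the boxes $\tilde W_{n,j}$, $0\le j<2^n$, partition the annulus $A_n=\{1-2^{-n}\le|z|<1-2^{-n-1}\}$, and ${\phi^\ast}^{-1}(A_n)=E_n$, on which $|w^\ast|=\theta_n$; hence $\nu(\tilde W_{n,j})=\theta_n^2\,\mu_{n,j}$ with $\mu_{n,j}=m({\phi^\ast}^{-1}(\tilde W_{n,j}))$, so $\sum_j\mu_{n,j}=\sigma_n$ and $\mu_{n,j}\le m_\phi(W_{n,j})\le C\,2^{-n}$ because $m_\phi$ is a Carleson measure. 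Thus the numbers $a_j=2^n\nu(\tilde W_{n,j})$ satisfy $\sum_j a_j=2^n\theta_n^2\sigma_n$ and $0\le a_j\le C\theta_n^2$, and since $x\mapsto x^{p/2}$ is concave a maximization over such configurations gives $\sum_j a_j^{p/2}\le(2^n\theta_n^2\sigma_n)(C\theta_n^2)^{p/2-1}+(C\theta_n^2)^{p/2}\le c_p\,(2^n\sigma_n+1)\,\theta_n^p$. Summing over $n$: $\sum_n\theta_n^p\le\sum_n\e^{-pn}<\infty$, and, $\omega_n\to\infty$ and $p<2$ being fixed, $2^n\theta_n^p=\e^{n\log2-pn\omega_n}\le2^{-n}$ for $n$ large, so $\sum_n 2^n\sigma_n\theta_n^p\lesssim\sum_n\sigma_n<\infty$. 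Hence the Luecking sum is finite and Theorem~\ref{Theo Luecking} yields $M_wC_\phi\in S_p$; as the $\omega_k$ were chosen independently of $p$, this holds for every $p<2$.

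The step I expect to be the real obstacle is the calibration of $\theta_n$. The naive weights (for instance $|w^\ast|^2=1-|\phi^\ast|$, as in Theorems~\ref{lens} and~\ref{HS pas S_p}) have $\theta_n\approx2^{-n/2}$, giving only $2^n\theta_n^p\sigma_n=2^{(1-p/2)n}\sigma_n$, which need not be summable for $p<2$ (this is precisely what Theorem~\ref{HS pas S_p} reflects). One must force $\theta_n$ to decay faster than every geometric rate, and the only room for this while keeping $\log|w^\ast|$ integrable is the extra factor $\omega_n\to\infty$, whose existence is exactly what $\sum_k k\sigma_k<\infty$ provides; matching these two requirements is the heart of the proof. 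A subsidiary point is the concavity estimate for $\sum_j a_j^{p/2}$ under $\sum_j a_j=S$, $0\le a_j\le M$, which is what replaces a useless factor $2^n$ by $2^n\sigma_n$ and thereby makes the $n$-series comparable to $\sum_n\sigma_n$.
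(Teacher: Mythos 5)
Your route is genuinely different from the paper's and, after one repair, it works. The paper never invokes Luecking's theorem here: it takes $|w_K^\ast|=(1-|\phi^\ast|)^K$ with $K>1/p$, notes $\|w_K^\ast(\phi^\ast)^n\|_\infty\lesssim n^{-K}$, and uses the criterion recalled in Section~\ref{notation} that for $p\le2$ it suffices to have $\sum_n\|T(z^n)\|^p<\infty$ for the single orthonormal basis $(z^n)$; then a du Bois--Reymond function $g$ with $\int_\D g(|z|)\log\frac1{1-|z|}\,dm_\phi(z)<\infty$ produces one weight $|w^\ast|=(1-|\phi^\ast|)^{g\circ|\phi^\ast|}$ dominated near the boundary by every $(1-|\phi^\ast|)^K$, hence working for all $p<2$ simultaneously. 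Your construction is the discrete counterpart: gaining the factor $\omega_k\to\infty$ in $\sum_k k\sigma_k\omega_k<\infty$ plays exactly the role of the paper's $g$, and you then verify Schatten membership by direct box counting via Luecking. What your version buys is a sharper conclusion (your single weight in fact puts $M_wC_\phi$ in every $S_p$, $p>0$, since your Luecking sum is finite for all $p$), at the cost of invoking Luecking's theorem instead of the elementary basis test.

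The one incorrect step is the ``concavity maximization'': for $0\le a_j\le M$ with $\sum_j a_j=S$ and $0<p/2<1$, the inequality $\sum_j a_j^{p/2}\le S\,M^{p/2-1}+M^{p/2}$ is false. Concavity works against you here: $\sum_j f(a_j)$ is Schur-concave, so under these constraints it is \emph{maximized} by spreading the mass equally, giving $N^{1-p/2}S^{p/2}$ with $N$ the number of terms (take $a_j=S/N$ with $N$ large to see your bound fail), while the configuration concentrated at the cap $M$ gives the \emph{minimum}; indeed $a_j^{p/2}\ge a_j M^{p/2-1}$ since $a_j\le M$, so your right-hand side is essentially a lower bound. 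Fortunately the step is unnecessary: the trivial estimate $\sum_j a_j^{p/2}\le 2^n(C\theta_n^2)^{p/2}=C^{p/2}\,2^n\theta_n^p$ already suffices, because $\omega_n\to\infty$ gives $2^n\theta_n^p=\e^{\,n\log2-pn\omega_n}\le2^{-n}$ for $n$ large, so the Luecking sum converges without ever involving $\sigma_n$. With that replacement, the rest of your argument --- the level sets $E_k$, the equivalence of the hypothesis with $\sum_k k\sigma_k<\infty$, the calibration $\theta_k=\e^{-k\omega_k}$, the identification of the singular numbers of $M_wC_\phi$ with those of $J_\nu$, and the $p$-independence of the weight --- is correct and complete.
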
 
\begin{proof}
By Theorem~\ref{lens}, we have $\int_\D\log \frac{1}{1 - |z|} \, dm_\phi(z) < \infty$.  
%Let $w$ be an outer function such that $|w^\ast|^2 = (1 - |\phi^\ast|)$.
Take an integer $K > 1 / p$ and let $w_K$ be an outer function 
such that $|w_K^\ast| = (1 - |\phi^\ast|)^K$. 

We point out that 

$$\|w_K^\ast (\phi^\ast)^n\|_{L^\infty(\T)}\le\sup_{t\in(0,1)}(1-t)^Kt^{n}\lesssim \frac{1}{n^K}$$

Hence we have, for some positive constant $C$ (depending on $K$ but not on $n$):
\begin{displaymath} 
\| (M_{w_K} C_\phi) (z^n) \|^2 = \int_\T |w_K^\ast|^2 |\phi^\ast|^{2 n} \, dm \leq \frac{C}{n^{2K}} \, \cdot
\end{displaymath} 

It follows that $\| (M_{w_K} C_\phi) (z^n) \|^p \leq C^{p/2} / n^{Kp}$ and hence 
\begin{displaymath} 
\sum_{n = 1}^\infty \| (M_{w_K} C_\phi) (z^n) \|^p < \infty \, , 
\end{displaymath} 
since $K p> 1$. 

Now, by the du Bois-Reymond lemma, there exists a measurable function $g \colon [0, 1] \to \R_+$ such that $g (t) \converge_{t \to 1} \infty$ and 
$\int_\D g (|z|) \, \log \frac{1}{1 - |z|} \, dm_\phi (z) < \infty$. So there is an outer function $w$ such that 
$|w^\ast| = (1 - |\phi^\ast|)^{g \circ |\phi^\ast|}$. Since $g (t) \converge_{t \to 1} \infty$, we have $g (t) \geq K$ for $t$ close enough to $1$ and it follows 
that $|w^\ast| \lesssim |w_K^\ast|$ (up to a constant depending on $K$ only). Hence $\| (M_w C_\phi) f \| \lesssim \| (M_{w_K} C_\phi) f \|$ for all 
$f \in H^2$, and $M_w C_\phi \in S_p$ since $M_{w_K} C_\phi \in S_p$. 
\end{proof}
\goodbreak

\begin{theorem} \label{Schatten p > 2}
For every $p < \infty$, if $M_w C_\phi \in S_p$ for some weight $w$, then there exists another weight $\tilde w$ for which $M_{\tilde w} C_\phi$ is 
Hilbert-Schmidt. 
\end{theorem}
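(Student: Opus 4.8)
We may assume $p > 2$: for $p \le 2$ the operator $M_w C_\phi$ already lies in $S_2$, hence is Hilbert--Schmidt, and $\tilde w = w$ works. For $p > 2$, the plan is to show that $\phi$ satisfies the criterion of Theorem~\ref{lens}, that is $\int_\T \log\big(1/(1 - |\phi^\ast|)\big)\, dm < \infty$; the desired weight $\tilde w$ is then the one produced in the proof of that theorem.

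Since $p \ge 2$ and $(z^n)_{n \ge 0}$ is an orthonormal basis of $H^2$, membership $M_w C_\phi \in S_p$ gives $\sum_n b_n^{p/2} < \infty$, where $b_n := \|(M_w C_\phi)(z^n)\|^2 = \int_\T |w^\ast|^2 |\phi^\ast|^{2 n}\, dm$. As $|\phi^\ast| \le 1$, the sequence $(b_n)$ is non-increasing, so comparing $2^{k-1} b_{2^k}^{p/2}$ with $\sum_{n = 2^{k-1}}^{2^k} b_n^{p/2}$ yields the polynomial decay $b_{2^k} \lesssim 2^{- 2 k / p}$. Next I would slice $\T$ along the level sets of $|\phi^\ast|$: set $F_k = \{\, 1 - 2^{-k} \le |\phi^\ast| < 1 - 2^{-k-1} \,\}$ (these cover $\T$ up to a null set, since $|\phi^\ast| < 1$ a.e. because $M_w C_\phi$ is compact, by the easy part of Theorem~\ref{lille1}) and $d_k = \int_{F_k} |w^\ast|^2\, dm$. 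On $F_k$ one has $|\phi^\ast|^{2 n} \ge (1 - 2^{-k})^{2 n}$; taking $n = 2^k$ gives $b_{2^k} \ge (1 - 2^{-k})^{2^{k+1}} d_k \gtrsim d_k$, hence $d_k \lesssim 2^{-2k/p}$. Since $(1 - |\phi^\ast|)^{-\alpha} \approx 2^{k \alpha}$ on $F_k$, we obtain, for every $\alpha \in (0, 2/p)$,
$$\int_\T |w^\ast|^2 (1 - |\phi^\ast|)^{- \alpha}\, dm \;\lesssim\; \sum_{k \ge 0} 2^{k \alpha} d_k \;\lesssim\; d_0 + \sum_{k \ge 1} 2^{k (\alpha - 2/p)} \;<\; \infty \, .$$

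It remains to get rid of the weight $|w^\ast|^2$. For this I would apply Lemma~\ref{lemme utile} with $\Omega = \T$, $\nu = m$, $v = 1 - |\phi^\ast|$, $u = \min(1, |w^\ast|^2)$ and $\alpha = 1/p$: here $\int_\T |\log u|\, dm < \infty$ because $\log |w^\ast| \in L^1(\T)$ (a non-zero function in $H^2$ has integrable log-modulus, see \cite{Duren}), and $\int_\T u\, v^{-\alpha}\, dm \le \int_\T |w^\ast|^2 (1 - |\phi^\ast|)^{-\alpha}\, dm < \infty$ by the previous step. Lemma~\ref{lemme utile} then gives $\int_\T |\log v|\, dm = \int_\T \log\big(1/(1 - |\phi^\ast|)\big)\, dm < \infty$, and Theorem~\ref{lens} provides a weight $\tilde w \in H^2$ for which $M_{\tilde w} C_\phi$ is Hilbert--Schmidt.

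I expect the middle paragraph to be the main point: one must upgrade $S_p$-membership to a genuine negative-moment estimate $\int_\T |w^\ast|^2 (1 - |\phi^\ast|)^{-\alpha}\, dm < \infty$ for some $\alpha > 0$ --- compactness of $M_w C_\phi$, or even mere summability of $\big(\|(M_w C_\phi)(z^n)\|^2\big)$, would not be enough --- and this self-improvement rests on the combination of $\sum_n b_n^{p/2} < \infty$ with the monotonicity of $(b_n)$. Once that estimate is available, Lemma~\ref{lemme utile} removes the weight and Theorem~\ref{lens} closes the argument.
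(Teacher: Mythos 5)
Your proof is correct, and its overall skeleton coincides with the paper's: reduce to $p>2$, upgrade $S_p$-membership to a negative-moment estimate $\int_\T |w^\ast|^2(1-|\phi^\ast|)^{-\alpha}\,dm<\infty$ for some $\alpha>0$, then invoke Lemma~\ref{lemme utile} and Theorem~\ref{lens}. Where you genuinely diverge is the middle step. The paper stays on the sequence side: with $c_n=\int_\T|w^\ast|^2|\phi^\ast|^{2n}\,dm$, it pairs the summability of the $c_n$'s with $n^{-\beta}$ by H\"older's inequality and then resums, using the binomial expansion of $(1-|\phi^\ast|^2)^{-\alpha}$ whose coefficients are $\approx n^{\alpha-1}$, to identify $\sum_n n^{-\beta}c_n$ with the negative moment. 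You argue instead on the measure side: monotonicity of $(b_n)$ plus a Cauchy-condensation argument gives the pointwise decay $b_{2^k}\lesssim 2^{-2k/p}$, and dyadic slicing of $\T$ along the level sets $\{1-2^{-k}\le|\phi^\ast|<1-2^{-k-1}\}$ converts this into the same integral bound, for every $\alpha<2/p$. The two mechanisms are of comparable length; yours is more elementary (no binomial asymptotics, no H\"older pairing) and makes the decay of the masses $d_k$ explicit, while the paper's is a quick resummation once the coefficient estimate is granted. Two minor touches of yours are worth noting: truncating to $u=\min(1,|w^\ast|^2)$ so that Lemma~\ref{lemme utile} applies verbatim with $u\le 1$ (the paper takes $u=|w^\ast|^2$, which may exceed $1$, though the lemma's proof tolerates this), and the explicit justification via the easy half of Theorem~\ref{lille1} that $|\phi^\ast|<1$ a.e., which is indeed needed before slicing.
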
 
\begin{proof}
For $p \leq 2$, this is obvious, with the same weight, since $S_p \subseteq S_2$. So we assume $p > 2$. We have 
$\sum_{n = 0}^\infty \| (M_w C_\phi) (z^n) \|^p < \infty$, i.e.:
\begin{displaymath} 
\sum_{n = 0}^\infty \bigg( \int_\T |w^\ast|^2 |\phi^\ast|^{2 n} \, dm \bigg)^p < \infty \, .
\end{displaymath} 
When $\sum_{n = 0}^\infty |c_n|^p < \infty$, the H\"older inequality implies that, for $\beta > 1/ q$ ($q$ is the conjugate exponent of $p$), we have:
\begin{displaymath} 
\sum_{n = 0}^\infty \frac{1}{n^\beta} \, |c_n| \leq \bigg( \sum_{n = 0}^\infty \frac{1}{n^{\beta q}} \bigg)^{1 / q} 
\bigg( \sum_{n = 0}^\infty |c_n|^p \bigg)^{1 / p} < \infty \, .
\end{displaymath} 
Now, 
\begin{displaymath} 
(1 - |\phi^\ast|^2)^{- \beta} = \sum_{n = 0}^\infty \binom{- \beta}{\, n} (- 1)^n |\phi^\ast|^{2 n} \, ,
\end{displaymath} 
and, by the Stirling formula $\binom{- \beta}{\, n} (- 1)^n \approx n^{\beta - 1}$. Hence if we take $\beta$ such that $1/ q < \beta < 1$ and set 
$\alpha = 1 - \beta$, we have $\alpha > 0$ and:
\begin{displaymath} 
\int_\T |w^\ast|^2 (1 - |\phi^\ast|^2)^{- \alpha} \, dm \approx \sum_{n = 0}^\infty \frac{1}{n^\beta} \int_\T |w^\ast|^2 |\phi^\ast|^{2 n} \, dm 
< \infty \, .
\end{displaymath} 
It follows from Lemma~\ref{lemme utile} that $\int_\T | \log (1 - |\phi^\ast|^2) | \, dm < \infty$, and then, from Theorem~\ref{lens}, that there is a 
weight $\tilde w$ for which $M_{\tilde w} C_\phi$ is Hilbert Schmidt. 
\end{proof}

Let us put together Theorem~\ref{lens}, Theorem~\ref{Schatten p < 2} and Theorem~\ref{Schatten p > 2}.

\begin{theorem} \label{theo recapitul} 
For any symbol $\phi$, the following assertions are equivalent:
\begin{itemize}
\item [$1)$] there is a weight $w$, with $w \in H^2$, such that $M_{w} C_\phi$ is Hilbert-Schmidt;

\item [$2)$] there is a weight $\tilde w$, with $\tilde w \in H^\infty$, such that $M_{\tilde w} C_\phi \in S_p$ for all $p > 0$;

\item [$3)$] there exist $p < \infty$ and a weight $w_p$, with $w_p \in H^\infty$, such that  $M_{w_p} C_\phi \in S_p$;

\item [$4)$] $\displaystyle \int_\T \log \frac{1}{1 - |\phi^\ast|} \, dm < \infty$.
\end{itemize}
\end{theorem}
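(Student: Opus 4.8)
The plan is to prove Theorem~\ref{theo recapitul} essentially by assembling the earlier results into a cycle of implications. Since $1)$, the existence of an $H^2$-weight $w$ making $M_w C_\phi$ Hilbert-Schmidt, is exactly condition $4)$ by Theorem~\ref{lens}, those two statements are already known to be equivalent. So the real content is to show that the \emph{a priori} weaker conditions $2)$ and $3)$ also collapse to the same thing. The natural cycle is $2) \Rightarrow 3) \Rightarrow 1) \Rightarrow 4) \Rightarrow 2)$; the first of these is trivial (take any fixed $p$, say $p=1$, and $w_p = \tilde w$), and $1) \Leftrightarrow 4)$ is Theorem~\ref{lens}. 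This leaves two implications to check carefully: $3) \Rightarrow 1)$ and $4) \Rightarrow 2)$.

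For $3) \Rightarrow 1)$, I would invoke Theorem~\ref{Schatten p > 2}: if $M_{w_p} C_\phi \in S_p$ for some finite $p$ and some weight $w_p$, then there is another weight $\tilde w \in H^2$ with $M_{\tilde w} C_\phi$ Hilbert-Schmidt, which is precisely $1)$. (One should note that Theorem~\ref{Schatten p > 2} is stated for an arbitrary weight, not requiring $w_p \in H^\infty$, so the hypothesis in $3)$ is if anything stronger than needed here.) For $4) \Rightarrow 2)$, I would use Theorem~\ref{Schatten p < 2}: assuming $4)$, Theorem~\ref{lens} gives a Hilbert-Schmidt weighted composition operator, so the hypothesis of Theorem~\ref{Schatten p < 2} is met, and its proof produces an outer function $w$ with $M_w C_\phi \in S_p$ for every $p < 2$. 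For the range $p \geq 2$ one checks directly that the weight $w_K$ built in the proof of Theorem~\ref{Schatten p < 2} (an outer function with $|w_K^\ast| = (1 - |\phi^\ast|)^K$, $K$ large) already satisfies $\sum_n \|(M_{w_K} C_\phi)(z^n)\|^p < \infty$ for all $p > 1/K$, hence $M_{w_K} C_\phi \in S_p$ for all $p \geq 2$ once $K$ is chosen large enough; and since these $w_K$ and $w$ lie in $H^\infty$ (being outer functions with bounded modulus), one gets the $H^\infty$ refinement demanded in $2)$. A minor point is to package a \emph{single} weight $\tilde w \in H^\infty$ that works for all $p > 0$ simultaneously: this is exactly what the du~Bois-Reymond construction in Theorem~\ref{Schatten p < 2} does for $p < 2$, and one should remark that the same $\tilde w$ also handles $p \geq 2$ because it dominates some $w_K$ in modulus, so $\|(M_{\tilde w} C_\phi)f\| \lesssim \|(M_{w_K} C_\phi)f\|$ and domination of singular numbers (recalled in Section~\ref{notation}) transfers membership in every $S_p$.

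The implications $1) \Rightarrow 4)$ and $2) \Rightarrow 3)$ are immediate, so after these steps the cycle closes. I expect the main obstacle to be bookkeeping rather than mathematics: one has to be slightly careful that the weights supplied by Theorems~\ref{Schatten p < 2} and~\ref{Schatten p > 2} have the regularity claimed ($H^2$ in $1)$, $H^\infty$ in $2)$ and $3)$) and, for $2)$, that one and the same weight works for the whole scale $p > 0$. Concretely, the only thing not literally written in an earlier statement is the observation that the outer function from the du~Bois-Reymond step, in addition to giving $S_p$ for $p < 2$, also gives $S_p$ for $p \geq 2$ via the domination $|w^\ast| \lesssim |w_K^\ast|$; this is a one-line remark using the $\|(M_{w_K} C_\phi)(z^n)\|^2 \lesssim n^{-2K}$ estimate already established in the proof of Theorem~\ref{Schatten p < 2}. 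Everything else is a direct citation of Theorems~\ref{lens},~\ref{Schatten p < 2} and~\ref{Schatten p > 2}.
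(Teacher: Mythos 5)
Your proposal is essentially the paper's own proof: the paper gives no separate argument for Theorem~\ref{theo recapitul}, stating only that it follows by putting together Theorems~\ref{lens}, \ref{Schatten p < 2} and \ref{Schatten p > 2}, which is exactly the assembly you describe (including the observation that the outer weights constructed there have modulus at most $1$, hence lie in $H^\infty$). One small repair: for $p \geq 2$ you should not deduce $M_{w_K} C_\phi \in S_p$ from $\sum_n \| (M_{w_K} C_\phi)(z^n) \|^p < \infty$ for the single basis $(z^n)$, since that one-basis test characterizes $S_p$ only for $p \leq 2$; instead just use the inclusion $S_q \subseteq S_p$ for $q \leq p$, which gives membership in every $S_p$ with $p \geq 2$ from the already established membership in some $S_q$ with $q < 2$.
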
 
\smallskip

As a consequence, we see that in general, the condition $m (\{ |\phi| = 1\}) = 0$ cannot give better than a compactification. 
\begin{theorem} \label{pas Schatten}
There exists a compactifiable symbol $\phi$, i.e. $m (\{ |\phi^\ast| = 1\}) = 0$, such that, whatever the weight $w$, $M_w C_\phi$ is not in any Schatten class 
$S_p$, with $p < \infty$. 
\end{theorem}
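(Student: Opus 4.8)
The plan is to produce one explicit symbol and then let the equivalences already proved do all the work, applying them in the contrapositive direction.

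\smallskip
\emph{Step 1: the symbol.} I would take for $\phi$ the outer function in $H^\infty$ whose boundary modulus is prescribed by
\begin{displaymath}
|\phi^\ast (\e^{it})| = u(t) := 1 - \e^{- 1/|t|} \, , \qquad 0 < |t| \le \pi \, .
\end{displaymath}
This is admissible because $\e^{-1/|t|} \in (0, \e^{-1/\pi}]$ on $(0, \pi]$, so $1 - \e^{-1/\pi} \le u(t) < 1$; hence $\log u$ is bounded, in particular $\int_\T \log u \, dm > - \infty$, so an outer function with modulus $u$ exists, lies in $H^\infty$ with $\|\phi\|_\infty \le 1$, and — being non-constant — satisfies $\phi(\D) \subseteq \D$ by the maximum modulus principle, so $\phi$ is a genuine analytic self-map (and $C_\phi$ is bounded on $H^2$). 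Since $\{|\phi^\ast| = 1\}$ reduces to $\{t = 0\}$, it is $m$-null, and Theorem~\ref{lille1} says $\phi$ is compactifiable.

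\smallskip
\emph{Step 2: the obstruction.} Here $1 - |\phi^\ast(\e^{it})| = \e^{-1/|t|}$, so
\begin{displaymath}
\int_\T \log \frac{1}{1 - |\phi^\ast|} \, dm = \frac{1}{2 \pi} \int_{- \pi}^{\pi} \frac{dt}{|t|} = + \infty \, ,
\end{displaymath}
i.e. condition $4)$ of Theorem~\ref{theo recapitul} fails for this $\phi$.

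\smallskip
\emph{Step 3: conclusion.} Suppose, for a contradiction, that $M_w C_\phi \in S_p$ for some weight $w$ (with $w \in H^2$, $w \not\equiv 0$) and some $p < \infty$. By Theorem~\ref{Schatten p > 2} there would then be $\tilde w \in H^2$ with $M_{\tilde w} C_\phi$ Hilbert--Schmidt, and then Theorem~\ref{lens} would force $\int_\T \log \frac{1}{1 - |\phi^\ast|} \, dm < \infty$, contradicting Step~2. Hence $M_w C_\phi \notin S_p$ for every weight $w$ and every $p < \infty$, which is the claim. (This is simply the implication $\lnot 4) \Rightarrow \lnot 3)$ of Theorem~\ref{theo recapitul}; one only has to note that the argument via Theorems~\ref{Schatten p > 2} and~\ref{lens} uses only $w \in H^2$, not merely $w \in H^\infty$.)

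\smallskip
I do not anticipate a genuine obstacle: all the substance has been pushed into the previously proved Theorems~\ref{lille1}, \ref{lens} and~\ref{Schatten p > 2}. The only points deserving a line of justification are that $u$ is bounded away from $0$ (so the $H^\infty$ outer function with that modulus exists and $\phi$ is a self-map), that $u < 1$ off a null set (so $\phi$ is really compactifiable and not already inducing a compact $C_\phi$), and the elementary divergence of $\int_0^\pi dt/t$.
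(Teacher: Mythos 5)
Your proposal is correct and follows essentially the same route as the paper: the same outer symbol with $|\phi^\ast(\e^{it})| = 1 - \e^{-1/|t|}$, the same observation that $\int_\T \log\frac{1}{1-|\phi^\ast|}\,dm = \infty$ while $m(\{|\phi^\ast|=1\})=0$, and the conclusion via the already-proved equivalences (Theorems~\ref{lille1}, \ref{lens}, \ref{Schatten p > 2}). You merely spell out the contrapositive application that the paper summarizes by ``clearly, this function works,'' which is a fine level of detail.
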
 
\begin{proof}
It suffices to find a symbol $\phi$ such that $m (\{ |\phi^\ast| = 1\}) = 0$ but such that $\int_\T \log \frac{1}{1 - |\phi^\ast|} \, dm = \infty$, i.e. an 
element of the unit ball of $H^\infty$ that is an extreme point of that unit ball but not en exposed point. If we set $u (t) = 1 - \e^{- 1 / |t|}$ for $|t| \leq \pi$, 
then $0 < 1 - \e^{- 1/\pi} \leq u (t) \leq 1$, so $\int_{|t|\leq \pi} \log u (t) \, dt > - \infty$, so there exists an outer function $\phi \in H^\infty$ such that 
$|\phi^\ast (\e^{it}) | = u (t)$. Clearly, this function works. 
\end{proof}
%

%%%%%%%%%%%%%%%%%%%%%%%%%%%%%%%%%%%%%%%%%%%%%%%%%%%%%%%%%%%%%%%%%%%%%%%%%%%%%
\section {Weighted composition operators on $H^p$} \label{Hp}

In this section we assume that $1 \leq p < +\infty$. We are interested here in finding a characterization of the symbols that can give a weighted composition 
operator belonging to some specific ideal of operators. In particular, we focus on the ideal of nuclear operators and the ideal of absolutely summing operators.

First let us recall 

\begin{itemize}
\item  [-] An operator $T \colon X \to Y$ between Banach spaces $X$ and $Y$ is nuclear if there are elements $y_n \in Y$ and linear forms 
$x^\ast_n \in X^\ast$ with $\sum_{n = 0}^\infty \|x^\ast_n\| \, \| y_n\| < \infty$ such that $T x = \sum_{n = 0}^\infty x^\ast_n (x) y_n$ for all 
$x \in X$.

\item [-] An operator $T \colon X \to Y$ between Banach spaces $X$ and $Y$ is $r$-summing, $1 \leq r < \infty$, if there is a positive constant $C$ such that:
\begin{displaymath} 
\bigg( \sum_{k = 1}^n \| T x_k \|^r \bigg)^{1 / r} 
\leq C \, \sup_{x^\ast \in B_{X^\ast}} \bigg( \sum_{k = 1}^n | \langle x^\ast , x_k \rangle |^r \bigg)^{1 / r}
\end{displaymath} 
for all finite sequence $(x_1, \ldots, x_n)$ in $X$.
\end{itemize}

The main result of this section is

\begin{theorem} 
Let $\phi \colon \D \to \D$ be a symbol.

The following assertions are equivalent.\par
\begin{enumerate}[{\rm (1)}]

\item There exists a weight $w$ such that $M_w C_\phi \colon H^p \to H^p$ is a nuclear operator for every $p\ge1$.

\item There exists a weight $w$ such that $M_w C_\phi \colon H^p \to H^p$ is $1$-summing for every $p\ge1$ (and hence is $r$-summing for every $r \geq 1$).

\item There exists a weight $w$ such that $M_w C_\phi \colon H^p \to H^p$ is $r$-summing for some $r \geq 1$ and some $p\ge1$.

\item $\displaystyle \int_\T \log \frac{1}{1 - |\phi^\ast|} \, dm < \infty$. 

%Moreover, for $p=1$, it is equivalent to the fact that there exists $w$ such that the weighted composition operator $M_w C_\phi \colon H^1 \to H^1$ is a 
%nuclear operator.
\end{enumerate}
\end{theorem}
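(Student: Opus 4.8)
The plan is to establish the chain of implications $(1)\Rightarrow(3)$, $(3)\Rightarrow(4)$, and $(4)\Rightarrow(1)$ together with $(4)\Rightarrow(2)\Rightarrow(3)$, so that all four become equivalent. The implication $(1)\Rightarrow(3)$ is soft: a nuclear operator is $1$-summing (it factors through $\ell^1\to c_0$ appropriately, or more directly nuclear operators are $r$-summing for all $r\ge 1$), so any weight witnessing nuclearity for all $p$ witnesses $r$-summing for, say, $r=1$ and $p=2$. Similarly $(2)\Rightarrow(3)$ is trivial. So the two substantive directions are $(3)\Rightarrow(4)$ and $(4)\Rightarrow(1)\,\&\,(2)$.

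For $(3)\Rightarrow(4)$ I would argue as follows. Suppose $M_w C_\phi\colon H^p\to H^p$ is $r$-summing for some $r,p$. First, $r$-summing operators are bounded and, more importantly, weakly compact and completely continuous, hence compact when the domain is reflexive (or one can use that $r$-summing implies $2$-summing implies Hilbert--Schmidt in the $H^2$ setting); in any case an $r$-summing operator maps weakly null sequences to norm null sequences. Applying this to the weakly null sequence $(z^n/\|z^n\|_{H^p})$ (normalized so it is weakly null and bounded) and testing the $r$-summing inequality against the finite sections, I get $\sum_n \|w^\ast(\phi^\ast)^n\|_{H^p}^r<\infty$ after controlling the right-hand side $\sup_{x^\ast}\sum|\langle x^\ast, z^k\rangle|^r$, which for $H^p$ one bounds using that the $z^k$ are a ``good'' (type-$p'$ or at least $\ell^r$-dominated) sequence — concretely the Carleson/interpolation estimate that $\sup_{\|g\|_{q}\le 1}\sum_k|\hat g(k)|^r$ is finite for the relevant range, or simply that on $H^p$ the functionals induced by $z^k$ form a bounded sequence and one can extract the needed summability from the definition by choosing a single dominating functional. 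Then $\sum_n \big(\int_\T |w^\ast|^{p}|\phi^\ast|^{np}\,dm\big)^{r/p}<\infty$, and by an argument parallel to the proof of Theorem~\ref{Schatten p > 2} (expand $(1-|\phi^\ast|^q)^{-\beta}$ as a power series, match coefficients, use Stirling) one deduces $\int_\T |w^\ast|^{p}(1-|\phi^\ast|)^{-\alpha}\,dm<\infty$ for some $\alpha>0$; then Lemma~\ref{lemme utile} with $u=|w^\ast|^p$ (which has $\int|\log u|\,dm<\infty$ by Jensen since $w\in H^p$, $w\not\equiv 0$) and $v=1-|\phi^\ast|$ yields $\int_\T\log\frac{1}{1-|\phi^\ast|}\,dm<\infty$, i.e. (4).

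For $(4)\Rightarrow(1)$ and $(4)\Rightarrow(2)$, the idea is exactly the du Bois-Reymond trick of Theorem~\ref{Schatten p < 2}: from $\int_\D\log\frac{1}{1-|z|}\,dm_\phi(z)<\infty$ produce $g\colon[0,1)\to\R_+$ with $g(t)\to\infty$ and $\int_\D g(|z|)\log\frac1{1-|z|}\,dm_\phi(z)<\infty$, then take the outer function $w$ with $|w^\ast|=(1-|\phi^\ast|)^{g\circ|\phi^\ast|}$. This forces $\|w^\ast(\phi^\ast)^n\|_{H^\infty}\le \sup_t (1-t)^{g(t)}t^n$, which decays faster than any power of $1/n$ because $g(t)\to\infty$; more carefully one shows $\|(M_wC_\phi)(z^n)\|_{H^p}= \|w^\ast(\phi^\ast)^n\|_{H^p}\le C_k/n^k$ for every $k$. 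Since $w\in H^\infty$, $M_wC_\phi$ is bounded on every $H^p$, and I would realize it as a nuclear operator by writing $f\mapsto w\,(f\circ\phi)=\sum_n \hat f(n)\,w\,\phi^n$, i.e. with $x^\ast_n=$ the $n$-th Taylor coefficient functional on $H^p$ (bounded, norm $\lesssim 1$) and $y_n=w\,\phi^n\in H^p$ with $\|y_n\|_{H^p}\le C_k/n^k$; choosing $k=2$ makes $\sum_n\|x^\ast_n\|\,\|y_n\|<\infty$, so the operator is nuclear on every $H^p$ simultaneously, giving (1). Nuclear operators are $1$-summing, so (2) follows as well (for $p=1$ one must check $x^\ast_n$ is bounded on $H^1$, which it is since Taylor coefficients of $H^1$ functions are bounded).

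The main obstacle I anticipate is the $(3)\Rightarrow(4)$ step, specifically controlling the right-hand side of the $r$-summing inequality applied to the monomials $z^k$ in $H^p$ for $p\ne 2$: unlike the Hilbert space case, $(z^k)$ is not orthonormal and the weak-$\ell^r$ norm $\sup_{x^\ast\in B_{(H^p)^\ast}}(\sum_k|\langle x^\ast,z^k\rangle|^r)^{1/r}$ over a block of $N$ monomials grows with $N$, so a naive application only gives a weaker summability. The fix is to run the argument through $p=2$ — first deduce from $r$-summing for some $r\le 2$ that $M_wC_\phi$ is Hilbert--Schmidt on $H^2$ (using that $(z^k)$ is an orthonormal basis there and the composition/inclusion structure, or via the general fact that $r$-summing operators into a Hilbert space are Hilbert--Schmidt), and then invoke Theorem~\ref{lens}; and separately handle $r>2$ by first showing the operator is $2$-summing after factoring, or by the coefficient-matching argument above directly on $H^p$ using the Carleson-measure boundedness of $m_\phi$ to tame the monomial functionals. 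Getting this reduction clean, and making sure the weight-independent conclusion (4) genuinely follows from a single $(r,p)$ pair, is where the real work lies.
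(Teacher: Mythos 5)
Your implications $(1)\Rightarrow(2)\Rightarrow(3)$ and $(4)\Rightarrow(1)\,\&\,(2)$ are fine, and the latter is essentially the paper's argument: the paper simply takes the outer weight with $|w^\ast|=(1-|\phi^\ast|)^2$, gets $\|w^\ast(\phi^\ast)^n\|_\infty\lesssim n^{-2}$, hence $\sum_n\|(M_wC_\phi)(e_n)\|_p<\infty$, and concludes nuclearity via the norm-one Taylor-coefficient functionals exactly as you do (your du Bois-Reymond weight is more than is needed, but harmless).

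The genuine gap is in $(3)\Rightarrow(4)$, which you yourself flag as ``where the real work lies'' without closing it. Your primary route (apply the $r$-summing inequality to blocks of monomials in $H^p$) fails for the reason you state: the weak-$\ell^r$ norm of $(z^k)_{k\le N}$ in $H^p$ grows with $N$, so you do not obtain $\sum_n\|w^\ast(\phi^\ast)^n\|_p^r<\infty$. Your proposed fix only works when $p=2$: there $(z^k)$ is orthonormal and any $r$-summing operator between Hilbert spaces is Hilbert--Schmidt, so Theorem~\ref{lens} applies. But assertion (3) allows an arbitrary $p\ge1$, and the fact you invoke (``$r$-summing operators into a Hilbert space are Hilbert--Schmidt'') requires a Hilbert \emph{domain}; for $p\ne2$ the domain is $H^p$ and no such soft argument is available, nor does $r$-summability of $M_wC_\phi$ on $H^p$ transfer in any obvious way to a Hilbert--Schmidt statement on $H^2$ (the associated measure $\nu_p=\phi^\ast(|w^\ast|^p m)$ changes with $p$). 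The paper bridges precisely this point with two nontrivial results on absolutely summing Carleson embeddings from \cite{Lef-Rod}: Theorem~8.4 there upgrades the $r$-summing embedding $J_{\nu_p}\colon H^p\to L^p(\nu_p)$ to an $r$-summing embedding on $H^s$ with $s=\min(2,p)$, and Proposition~2.3 then yields the quantitative necessary condition $\int_\D(1-|z|)^{-s/2}\,d\nu_p<\infty$, i.e. $\int_\T|w^\ast|^p(1-|\phi^\ast|)^{-s/2}\,dm<\infty$, after which Lemma~\ref{lemme utile} (with $u=|w^\ast|^p$, $v=1-|\phi^\ast|$, $\alpha=s/2$) gives (4). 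Your sketch contains neither of these ingredients nor a substitute for them (the ``coefficient-matching'' analogue of Theorem~\ref{Schatten p > 2} also has no starting point here, since you never obtain the summability of the moments $\int_\T|w^\ast|^p|\phi^\ast|^{np}\,dm$ for $p\ne2$), so the implication $(3)\Rightarrow(4)$, and hence the equivalence, is not established by the proposal.
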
 
\begin{proof}
Clearly (1) implies (2), which implies (3).
\smallskip

The weighted composition operator $(M_w C_\phi)$ can be viewed as the Carleson embedding $J_{\nu_p} \colon H^p \to L^p (\nu_p)$ where 
$\nu_p = \phi^\ast (|w^\ast|^p m)$ is a finite measure on $\D$. 
\smallskip

Assume (3). Then $J_{\nu_p}$ is actually $r$-summing on $H^s$ where $s=min(2,p)$ thanks to \cite[Theorem 8.4]{Lef-Rod}. By 
\cite[Proposition~2.3, 1)]{Lef-Rod}, we have:
\begin{displaymath} 
\int_\T \frac{|w^\ast|^p}{(1 - |\phi^\ast|)^{s / 2}} \, dm = \int_\D \frac{d \nu_p (z)}{(1 - |z|)^{s / 2}} < \infty \, .
\end{displaymath} 

By Lemma~\ref{lemme utile}, that implies that $\int_\T \log \frac{1}{1 - |\phi^\ast|} \, dm < \infty$ and (4) is satisfied.
\smallskip

Now assume that (4) is satisfied. For every $f \in H^p$, we denote by $\hat f (n)$ its $n^{th}$ Taylor coefficient. We point out that  the functional 
$f\in H^p\mapsto \hat f (n)$ has norm $1$. Then, for any operator $T \colon H^p \to Y$ satisfying  $\sum_{n = 0}^\infty \| T e_n \| < \infty$ where  
$e_n (z) = z^n$, it is easy to check that $T$ is a nuclear operator.

Our assumption implies that there exists an outer function $w$ such that $|w^\ast| = (1 - |\phi^\ast|)^2$ a.e. and we already pointed out that 
$\|w^\ast (\phi^\ast)^n\|_{L^\infty(\T)}\le \frac{C}{n^2}$, for some constant $C > 0$.

Hence:
\begin{displaymath} 
\| (M_{w} C_\phi) (e_n) \|_p = \bigg(\int_\T |w^\ast|^p |\phi^\ast|^{p n} \, dm \bigg)^\frac{1}{p} \leq \frac{C}{n^{2}} \,\cdot 
\end{displaymath} 

We get that $\dis \sum_n\| (M_{w} C_\phi) (e_n) \|_p<+\infty$ and hence that $(M_{w} C_\phi)$ is a nuclear operator.
\end{proof}
\goodbreak
%%%%%%%%%%%%%%%%%%%%%%%%%%%%%%%%%%%%%%%%%%%%%%%%%%%%%%%%%%%%%%%%%%%%%%%%%%%%
\section{Decompactification}  \label{decompactification} 

%%%%%%%%%%%%%%%%%%%%%%%%%%%%%%%%
\subsection{An initial example}

We refer to \cite[page~27]{SHAPI} (see also \cite{LELIQURO}) for the definition of the lens map $\lambda_\theta$ of parameter $\theta$, $0 < \theta < 1$.

We saw in \cite[Theorem~4.1]{GDHL} that multiplication by a second symbol $w$ can improve the degree of compactness of a composition operator 
$C_\varphi$. For example, if $\phi = \lambda_\theta$, which satisfies (\cite[Theorem~2.1]{LELIQURO}):
\begin{displaymath} 
\e^{- b_1 \sqrt{n}} \lesssim a_{n} (C_{\lambda_\theta}) \lesssim \e^{- b_2 \sqrt{n}} 
\end{displaymath} 
(implying in particular that $C_{\lambda_\theta}$ is in all Schatten classes $S_p (H^2)$, $p > 0$), we exhibited functions $w\in H^\infty$ such that:
\begin{displaymath} 
\e^{- b'_1 n/\log n} \lesssim  a_{n}(M_w C_\varphi) \lesssim \e^{- b'_2 n/\log n}.
\end{displaymath} 
We wish to prove here  that, conversely, multiplication by $w$ can in some sense ``decompactify'' $C_\phi$ while keeping it bounded. 
We shall begin with an explicit example.

\begin{theorem} \label{app} 
Let $\lambda_\theta$ be a lens map, $0 < \theta < 1$, and let $w (z) = (1 - \lambda_\theta (z))^{a}$ where 
$a = \frac{1}{2} \big(1 - \frac{1}{\theta} \big) < 0$. Then $w \in H^2$ and the weighted composition operator $M_w C_{\lambda_\theta}$ is 
bounded but not compact on $H^2$, though $C_{\lambda_\theta}$ is in all Schatten classes $S_p (H^2)$, $p > 0$. 
\end{theorem}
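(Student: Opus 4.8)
The statement to prove is Theorem~\ref{app}: for the lens map $\lambda_\theta$ and the weight $w(z) = (1-\lambda_\theta(z))^a$ with $a = \tfrac12(1-\tfrac1\theta) < 0$, we have $w \in H^2$, the operator $M_w C_{\lambda_\theta}$ is bounded, but it is not compact. The plan is to pass, as in the earlier sections, to the pull-back measure $\nu = \lambda_\theta^\ast(|w^\ast|^2 m)$ and use the Carleson embedding theorem: $M_w C_{\lambda_\theta}$ is bounded iff $\nu$ is a Carleson measure ($\rho_\nu(h) = \mathrm{O}(h)$) and compact iff $\nu$ is a vanishing Carleson measure ($\rho_\nu(h) = \mathrm{o}(h)$). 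So the whole proof reduces to estimating $\rho_\nu$ from above and from below and checking it is \emph{exactly} of order $h$, not $\mathrm{o}(h)$.

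First I would recall the geometry of the lens map near its fixed boundary point $1$: $\lambda_\theta$ maps $\D$ onto a lens-shaped region, $\lambda_\theta(1) = 1$, and near $\xi = 1$ one has the well-known comparison $1 - |\lambda_\theta(z)| \approx |1 - \lambda_\theta(z)| \approx (1-|z|)^\theta$ (up to also controlling the argument), while away from $1$ the map is comfortably inside $\D$ so contributes nothing to the boundary behavior of $\nu$. From this, $|w^\ast| = |1 - \lambda_\theta^\ast|^a \approx (1-|\lambda_\theta^\ast|)^a$, so $|w^\ast|^2 \approx (1-|\lambda_\theta^\ast|)^{2a}$ with $2a = 1 - 1/\theta < 0$. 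To see $w \in H^2$: $\int_\T |w^\ast|^2\,dm \approx \int_\T (1-|\lambda_\theta^\ast|)^{2a}\,dm$, and since $1 - |\lambda_\theta^\ast(e^{it})| \approx |t|^\theta$ near $t=0$, this integral behaves like $\int_0 |t|^{2a\theta}\,dt = \int_0 |t|^{\theta-1}\,dt < \infty$ because $\theta > 0$. (One should double-check that $w$ is genuinely in $H^2$ and not merely that its boundary modulus is square-integrable — but $w$ is an explicit analytic function, a power of the bounded analytic function $1-\lambda_\theta$, and $1-\lambda_\theta$ is outer with the stated boundary modulus, so $|w^\ast|^2$ integrable gives $w \in H^2$.)

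Next, the core estimate: for a Carleson window $W(\xi,h)$, I would compute $\nu[W(\xi,h)] = \int_{{\lambda_\theta^\ast}^{-1}[W(\xi,h)]} |w^\ast|^2\,dm$. The only windows that matter are those centered near $\xi = 1$ (others give $\nu[W(\xi,h)] = 0$ for $h$ small). The preimage ${\lambda_\theta^\ast}^{-1}[W(1,h)]$ is, up to constants, an arc $\{|t| \lesssim h^{1/\theta}\}$ (since $|1-\lambda_\theta^\ast| \lesssim h$ forces $|t|^\theta \lesssim h$), and on that arc $|w^\ast|^2 \approx |t|^{2a\theta} = |t|^{\theta - 1}$. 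Hence
\[
\nu[W(1,h)] \approx \int_{|t| \lesssim h^{1/\theta}} |t|^{\theta-1}\,dt \approx \big(h^{1/\theta}\big)^{\theta} = h .
\]
Thus $\rho_\nu(h) \approx h$: the upper bound $\rho_\nu(h) \lesssim h$ gives boundedness via Carleson, and the lower bound $\rho_\nu(h) \gtrsim h$ (taking $\xi = 1$) shows $\rho_\nu(h) \neq \mathrm{o}(h)$, so $\nu$ is not a vanishing Carleson measure and $M_w C_{\lambda_\theta}$ is not compact. The last clause — that $C_{\lambda_\theta}$ itself is in all $S_p$ — is already quoted in the surrounding text from \cite[Theorem~2.1]{LELIQURO} and \cite[page~27]{SHAPI}, so nothing new is needed there.

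\textbf{Main obstacle.} The delicate point is making the heuristic $1 - |\lambda_\theta^\ast(e^{it})| \approx |t|^\theta$ and the shape of ${\lambda_\theta^\ast}^{-1}[W(\xi,h)]$ precise enough to get matching two-sided bounds, including controlling the argument of $\lambda_\theta^\ast$ (not just its modulus) so that the preimage of the Carleson window is comparable to an arc and one is not accidentally throwing away or adding mass. This is the standard lens-map boundary analysis (as in \cite{LELIQURO}, \cite{LLQR-2008}), and I would lean on those references for the precise comparisons rather than re-deriving them; the only genuinely new computation is the integral $\int |t|^{\theta-1}\,dt$ over the preimage arc, which is exactly where the exponent $a = \tfrac12(1-\tfrac1\theta)$ was engineered to make the answer come out proportional to $h$ — large enough to keep $\nu$ Carleson (boundedness), small enough to keep it from being vanishing (non-compactness).
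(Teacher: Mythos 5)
Your proposal is correct and takes essentially the same route as the paper: reduce boundedness and (non-)compactness of $M_w C_{\lambda_\theta}$ to the Carleson, respectively vanishing Carleson, property of the pull-back measure, quote the two-sided lens-map boundary estimates (modulus and argument of order $|t|^\theta$) from \cite[Lemma~2.5]{LELIQURO}, and use the identity $2a+1/\theta=1$ to get $\rho_\nu(h)\approx h$, together with the Smirnov/outer-function remark that makes $w\in H^2$ rigorous. The only difference is bookkeeping: you evaluate $\nu[W(1,h)]$ by one integral of $|t|^{\theta-1}$ over the preimage arc on $\T$, whereas the paper does the equivalent computation on $\D$, via a dyadic decomposition of $W(1,h)$ for the upper bound and a modified Hastings--Luecking window for the lower bound.
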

\begin{proof} 
We first observe that $w \in H^2$ since $|1 - \lambda_{\theta}^\ast (\xi)|\approx |1 - \xi|^\theta$ when $\xi \in \T$ (see \cite[Lemma~2.5]{LELIQURO}) 
and $2 \, a \, \theta = \theta - 1 > - 1$. Let now $f \in H^2$. Then we have, formally: 
\begin{displaymath} 
\Vert M_w C_\phi (f) \Vert^2 = \int_{\T} |1 - \lambda_{\theta}^\ast (\xi)|^{2 a} \, |f \circ \lambda_{\theta}^\ast (\xi)|^2 \, dm (\xi) 
= \int_{\D} |f (u)|^2 \, d\mu (u) \, ,
\end{displaymath} 
where:
\begin{displaymath} 
d \mu = |1 - u|^{2 a} \, d m_{\lambda_{\theta}} (u) \, ,
\end{displaymath}
with $m_{\lambda_{\theta}}= \lambda_{\theta}^{\ast} (m)$.

It is sufficient to prove that $\mu$ is a Carleson measure, but not a vanishing one, for $H^2$. We can restrict ourselves to the Carleson windows 
$W (1, h)$ centered at~$1$. 

We know (\cite[Lemma~2.5]{LELIQURO}) that,  for some constants $C > c > 0$, depending on $\theta$, we have 
$c \, |t|^\theta \leq 1 - |\lambda_\theta^\ast (\e^{it}) | \leq C \, |t|^\theta$ and 
$| \arg [\lambda_\theta^\ast (\e^{it}) ] | \leq C \pi \, |t|^\theta$; it follows easily that 
$m_{\lambda_{\theta}} [W (1, h)] \approx h^{1/\theta}$. 
Hence:
\begin{align*} 
\mu [W (1, h)] & = \sum_{n = 0}^\infty \mu [W (1, 2^{- n} h ) \setminus W (1, 2^{- n - 1} h)] \\ 
& \approx \sum_{n = 0}^\infty (2^{- n} h)^{2 a} m_{\lambda_{\theta}} [W (1, 2^{- n} h) \setminus W (1, 2^{- n - 1} h)]  \\
& \lesssim \sum_{n = 0}^\infty (2^{- n} h)^{2 a} (2^{- n} h)^{1/\theta} \lesssim h \sum_{n = 0}^\infty 2^{- n} = 2 h 
\end{align*} 
(since $2 a + 1 / \theta = 1$), proving that $\mu$ is a Carleson measure. 

On the other hand, if we consider the modified Hastings-Luecking windows:
\begin{displaymath} 
\tilde {\tilde W} (1, h) = \{ z \in \D \tq (c / 2 C) \, h \leq1- |z| \leq h \quad \text{and} \quad |\arg (z) | \leq \pi h \} \, ,
\end{displaymath} 
we have $m_{\lambda_{\theta}} \big(\tilde{\tilde W} (1, h) \big) \gtrsim h^{1/\theta}$, because if $(h / 2 C)^{1 / \theta} \leq |t| \leq (h / C)^{1 / \theta}$, 
we have $1 - |\lambda_\theta^\ast (\e^{it}) | \leq C \,|t|^\theta \leq h$, 
$ 1 - |\lambda_\theta^\ast (\e^{it}) | \geq c \, |t|^\theta \geq (c / 2 C) \, h$ 
 and $| \arg [\lambda_\theta^\ast (\e^{it}) ] | \leq C \pi \, |t|^\theta \leq \pi \, h$, so $\lambda_\theta^\ast (\e^{it}) \in \tilde {\tilde W} (1, h)$. 
It follows that:
\begin{align*}
\mu [W (1, h)] \geq \mu \big( \tilde{\tilde W} (1, h) \big) \gtrsim h^{2 a} m_{\lambda_{\theta}} \big(\tilde{\tilde W} (1, h) \big) 
\gtrsim h^{2 a} h^{1 / \theta} = h \, ,
\end{align*}
so $\mu$ is not a vanishing Carleson measure.
\end{proof}
 %

%%%%%%%%%%%%%%%%
\subsection{The general case}

We now turn to the general case, with a less explicit construction, under the following form.
\goodbreak

\begin{theorem} \label{geca}  
An analytic self-map $\phi \colon \D\to \D$ is decompactifiable if and only if $\Vert \phi \Vert_\infty = 1$.  
\end{theorem}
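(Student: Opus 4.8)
\textbf{Proof plan for Theorem~\ref{geca}.}

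The plan is to prove the two implications separately. For the \emph{necessity} of $\Vert \phi \Vert_\infty = 1$: suppose $\Vert \phi \Vert_\infty = r < 1$, so that $\phi(\D)$ stays inside the disk $r\D$. Then for any $w \in H^2$ the composition $f \mapsto w\,(f\circ\phi)$ factors through the restriction of analytic functions to $r\D$, which is a compact (indeed nuclear) operator from $H^2$ into, say, the disk algebra on $r\D$; equivalently, $m_\phi$ is supported in $r\D$ and the pull-back measure $\nu_w = \phi^\ast(|w^\ast|^2 m)$ is supported in $r\D$, hence $\rho_{\nu_w}(h) = 0$ for $h < 1-r$, so $\nu_w$ is trivially a vanishing Carleson measure. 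By the Carleson embedding theorem (as used in the proof of Theorem~\ref{lille1}), $M_w C_\phi$ is compact for \emph{every} admissible $w$, so $\phi$ is not decompactifiable. This direction is routine.

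The substantial direction is \emph{sufficiency}: assuming $\Vert \phi \Vert_\infty = 1$, we must build a single weight $w \in H^2$ such that $M_w C_\phi$ is bounded but not compact. By the Carleson-type description, $M_w C_\phi$ is bounded but not compact exactly when $\nu_w = \phi^\ast(|w^\ast|^2 m)$ is a Carleson measure that is not a vanishing one, i.e. $\rho_{\nu_w}(h) \approx h$ along some sequence $h \to 0$. So the goal becomes: choose a nonzero $w \in H^2$ so that $\int_{{\phi^\ast}^{-1}[W(\xi,h)]} |w^\ast|^2\,dm \lesssim h$ for all windows, but $\gtrsim h_k$ for a sequence of windows $W(\xi_k, h_k)$ with $h_k \to 0$. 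The hypothesis $\Vert\phi\Vert_\infty = 1$ gives points $z_k \in \D$ with $|\phi(z_k)| \to 1$; pushing these to the boundary (via $\phi^\ast$, or by a normal-families/subordination argument) one finds, for a suitable sequence $h_k \downarrow 0$ and boundary points $\xi_k$, that $m_\phi[\overline{W(\xi_k, h_k)}]$ is "not too small" — the natural lower bound being of order $h_k$ after passing to a subsequence, possibly after thinning so the windows $W(\xi_k,h_k)$ are pairwise disjoint. I would then take $w$ to be an outer function whose modulus $|w^\ast|$ equals $1$ on $\bigcup_k {\phi^\ast}^{-1}[W(\xi_k,h_k)]$ (or a scaled version thereof) and is made very small elsewhere, exactly as in the construction of Theorem~\ref{lille1} but \emph{reversed}: instead of damping the mass near the boundary we \emph{preserve} it on the chosen sparse family of windows while damping it on their complement, so that the total mass stays finite ($w \in H^2$) yet $\nu_w[W(\xi_k,h_k)] \gtrsim h_k$. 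The dilute/disjoint choice of the $h_k$ (e.g. $h_{k+1} \ll h_k$, windows disjoint) is what keeps $\rho_{\nu_w}(h) = \mathrm{O}(h)$ globally: any window of size $h$ meets at most a controlled number of the selected pieces, so the Carleson bound for $m_\phi$ itself transfers to $\nu_w$.

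The main obstacle I anticipate is the quantitative lower bound: from $\Vert\phi\Vert_\infty = 1$ alone one only gets interior points $z_k$ with $|\phi(z_k)|\to 1$, and it is not immediate that $m_\phi$ (a measure on $\overline{\D}$ which may well be supported away from any particular boundary arc) charges Carleson windows at the boundary at the rate $\gtrsim h$. One must argue that the image $\phi(\D)$ accumulating at $\partial\D$ forces, through the Littlewood subordination / harmonic-measure estimates, that $m_\phi[\overline{W(\xi_k,h_k)}] \gtrsim h_k$ for a suitable geometric sequence — or, if that fails, that one can still find a slower sequence along which a lower bound of the form $m_\phi[\overline{W(\xi_k,h_k)}] \geq \eta_k h_k$ holds with $\eta_k$ not too small, and then absorb the $\eta_k$ into the weight. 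Handling the case where $|\phi^\ast| < 1$ a.e.\ on $\T$ (so that $m_\phi$ lives on $\D$ and all the "boundary" mass is really interior mass creeping outward) versus the case $m(\{|\phi^\ast| = 1\}) > 0$ (where a cruder weight, as in the necessity proof of Theorem~\ref{lille1}, already does the job) may require splitting into subcases. Once the lower bound is secured, the outer-function construction and the Carleson verification are routine modifications of the arguments already given in Sections~\ref{compactification} and~\ref{decompactification}.
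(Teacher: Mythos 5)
Your necessity direction is correct: if $\Vert\phi\Vert_\infty=r<1$, every $\nu_w=\phi^\ast(|w^\ast|^2m)$ is supported in $r\overline{\D}$, hence trivially a vanishing Carleson measure, and $M_wC_\phi$ is compact for every admissible $w$. (The paper argues even more directly, estimating $\Vert M_wC_\phi(f_n)\Vert_2\le\Vert w\Vert_2\,\Vert f_n\circ\phi\Vert_\infty$ for a weakly null sequence $(f_n)$, but your version is fine.)

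The gap is in the sufficiency direction, precisely at the point you flag as the main obstacle, and your plan does not get past it. The only nontrivial case is when $C_\phi$ is compact (otherwise $w=1$ already decompactifies), and then $m_\phi$ is a \emph{vanishing} Carleson measure: $m_\phi[\overline{W(\xi,h)}]={\rm o}(h)$ uniformly, so no sequence of windows with $m_\phi[\overline{W(\xi_k,h_k)}]\gtrsim h_k$ can exist, and no subordination or harmonic-measure estimate will extract one from $\Vert\phi\Vert_\infty=1$. Worse, your proposed weight satisfies $|w^\ast|\le1$ (equal to $1$ on the preimages of the chosen windows, small elsewhere), so $\nu_w\le m_\phi$ and $\nu_w$ is again vanishing Carleson: such a weight keeps $M_wC_\phi$ compact no matter how the windows are chosen. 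The missing idea is \emph{amplification by an unbounded weight}. All that $\Vert\phi\Vert_\infty=1$ guarantees --- and all that is needed --- is that each annulus $\Gamma_n=\{1-2^{-n}\le|z|<1\}$ has $m_\phi(\Gamma_n)>0$ (since $m(\{|\phi^\ast|=1\})=0$ in the compact case, $m_\phi(\Gamma_n)=0$ would force $\Vert\phi\Vert_\infty\le1-2^{-n}$); splitting into dyadic boxes one gets Hastings--Luecking boxes $\tilde W_{k_n}=\tilde W(\xi_n,2^{-k_n})$ with $m_\phi(\tilde W_{k_n})>0$, possibly extremely small. The paper then sets $|w^\ast|^2=u:=1+\sum_n\frac{2^{-k_n}}{m_\phi(\tilde W_{k_n})}\,\ind_{{\phi^\ast}^{-1}(\tilde W_{k_n})}$: the (possibly huge) factor $2^{-k_n}/m_\phi(\tilde W_{k_n})$ boosts the $\nu_w$-mass of each selected box to exactly $2^{-k_n}$, so $\nu_w[W(\xi_n,2^{-k_n})]\ge2^{-k_n}$ and $\nu_w$ is not vanishing Carleson. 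This is legitimate because $0\le\log u\le u-1$ and $\int_\T(u-1)\,dm=\sum_n2^{-k_n}<\infty$, so $\log u\in L^1(\T)$ and an outer $w\in H^2$ with $|w^\ast|^2=u$ exists; and the Carleson bound is automatic: a window of size $2^{-N}$ meets only boxes with $k_n\ge N-1$, so $\nu_w(W)\le m_\phi(W)+\sum_{k_n\ge N-1}2^{-k_n}\le m_\phi(W)+4\cdot2^{-N}={\rm O}(2^{-N})$. Your remark about absorbing $\eta_k$ into the weight gestures toward this, but the essential point is that no lower bound on the box masses is needed at all --- strict positivity suffices --- and without this amplification step your construction cannot leave the compact regime.
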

\begin{proof}
First assume that $\Vert \phi \Vert_\infty < 1$. Let $w \in H^2$ and $(f_n)$ a weakly null sequence in $H^2$; this implies that $f_n \converge_{n \to \infty} 0$ 
uniformly on compact subsets of $\D$, so that $\Vert f_{n} \circ \phi \Vert_\infty \converge_{n \to \infty} 0$. But then:
\begin{displaymath} 
\Vert M_w C_{\phi} (f_n) \Vert _2 \leq \Vert w \Vert _2 \, \Vert f_{n} \circ \phi \Vert _\infty \converge_{n \to \infty} 0 \, .
\end{displaymath} 
This shows that $M_w C_\phi$ is compact for any $w \in H^2$.
\smallskip

Now, assume  that $\Vert \phi \Vert_\infty = 1$. We are going to show that $\phi$ is decompactifiable. 

We need to find a weight $w \in H^2$ such that the finite (since $w \in H^2$) measure $\nu = \phi^{\ast} (|w^\ast|^2 \, m)$,  namely:
\begin{displaymath} 
\nu (A) = \int_{{\phi^\ast}^{- 1} (A)} |w^\ast|^2 \, dm 
\end{displaymath} 
is Carleson (ensuring that $M_{w} C_\phi \colon H^2 \to H^2$ is bounded), but not vanishing Carleson (implying that $M_{w} C_\phi \colon H^2 \to H^2$ 
is not compact).

If $C_\phi$ is not compact, it suffices to take $w = 1$.
 
We now assume that $C_\phi$ is compact. Then $m ( \{|\phi^\ast| = 1\})  = 0$. 

This fact and the hypothesis $\Vert \phi \Vert_\infty = 1$ clearly imply that $m_\phi (\Gamma_n) > 0$ for each $n$, where $\Gamma_n$ is the annulus $\{z \in \D \tq 1 - 2^{- n} \leq |z| < 1 \}$. If we set:
\begin{displaymath} 
C_l = \{z \in \D \tq 1 - 2^{- l} \leq |z|< 1 - 2^{- l - 1}\} \, , 
\end{displaymath} 
we have $\Gamma_n = \bigcup_{l \geq n} C_l$, so that $m_\phi (C_l) > 0$ for some $l \geq n$. We can therefore find an increasing sequence $(k_n)$ of integers 
such that $m_\phi (C_{k_n}) > 0$ for each $n$. Splitting in the natural way $C_{k_n}$ into $2^{k_n}$ Hastings-Luecking boxes, we can find a sequence 
$(\xi_n)$ of points of $\T$ such that, with $\tilde W_{k_n} = \tilde W (\xi_n, 2^{- k_n})$:
\begin{displaymath} 
m_\phi (\tilde W_{k_n}) > 0 \, .
\end{displaymath} 

We define our weight $w$ as an outer function $w \in H^2$ with boundary values $w^\ast$. Let
\begin{displaymath} 
u = 1 + \sum_{n = 1}^\infty \frac{2^{- k_n}}{m_\phi (\tilde W_{k_n})} \ind_{\phi^{- 1}(\tilde W_{k_n})} \, ;
\end{displaymath} 
Then $u \geq 1$, so $\log u \geq 0$, and:
\begin{displaymath} 
0 \leq \int_\T \log u \, dm \leq \int_\T (u-1) \, dm =  \sum_{n = 1}^\infty 2^{- k_n} \leq 1  < \infty \, ;
\end{displaymath} 
Hence $\log u \in L^1 (\T)$ and there is an outer function $w \in H^2$ such that $| w^\ast |^2 = u$ (see \cite[page 24]{Duren}). 

Now, if $\nu = \phi^{\ast} (|w^\ast|^2 \, m) = \phi^\ast ( u \, m)$, we have:
\begin{displaymath} 
\nu (A) = m_\phi (A) + \sum_{n = 1}^\infty \frac{2^{- k_n}}{m_\phi (\tilde W_{k_n})} \, m_\phi (A \cap \tilde W_{k_n}) \, ,
\end{displaymath}
and $\nu$ is not a vanishing Carleson measure since, with $W_{k_n} = W (\xi_n, 2^{- k_n})$:
\begin{displaymath} 
\nu (W_{k_n}) \geq  2^{- {k_n}} \frac{m_\phi (\tilde W_{k_n} \cap W_{k_n})}{m_\phi (\tilde W_{k_n})} = 2^{- k_n} \, .
\end{displaymath} 

Let now $W = W (\xi, h)$ be an arbitrary Carleson window. Without loss of generality, we can assume $h = 2^{- N}$ for some positive integer $N$, and we 
observe that if $z \in W \cap \tilde W_{k_n}$, then $1 - 2^{- N} \leq |z| \leq 1 - 2^{- k_{n} - 1}$, implying $k_n\geq N - 1$. Hence 
$W \cap \tilde W_{k_n} =\emptyset$ for $k_n < N - 1$ and:
\begin{align*}
\nu (W ) 
& = m_\phi (W) + \sum_{k_n \geq N - 1} 2^{- k_n} \frac{m_\phi (\tilde W_{k_n} \cap W)}{m_\phi (\tilde W_{k_n})} \\
& \leq m_\phi (W) + \sum_{k_n \geq N - 1} 2^{- k_n} \leq  m_\phi (W) + \sum_{l \geq N - 1} 2^{- l} \\
& = m_\phi (W) + 4 h \, .
\end{align*}
Since $C_\phi$ is bounded, $m_\phi$ is a Carleson measure and $ m_\phi (W) = {\rm O}\, (h)$; therefore $\nu (W) = {\rm O}\, (h)$ and hence 
$\nu$ is a Carleson measure. This shows that $C_\phi$ is  decompactified by $M_w$ and that completes the proof.
\end{proof}

\noindent {\bf Remark.}  For $p \geq 1$, if we set $\tilde w = w^{2 / p}$, then $w \in H^p$ and the same proof shows that the weighted composition 
operator $M_{\tilde w} C_\phi \colon H^p \to H^p$ is bounded but not compact.
\bigskip

\noindent{\bf Acknowledgement.} L. Rodr{\'\i}guez-Piazza is partially supported by the project MTM2015-63699-P (Spanish MINECO and FEDER funds). 
Parts of this paper was made when he visited the Universit\'e d'Artois in Lens and the Universit\'e de Lille, in April 2018 and January 2019. It is his pleasure to 
thank all his colleagues in these universities for their warm welcome.

This work is also partially supported by the grant ANR-17-CE40-0021 of the French National Research Agency ANR (project Front).
\goodbreak
%%%%%%%%%%%%%%%%%%%%%%%%%%%%%%%%%%%%%%%%%%%%%%%%%%%%%%%%%%%%%%%%%%%%%%%%%%%%%%

 %%%%%%%%%%%%%%%%%%%%%%%%%%%%%%%%%%%%%%%%%%%%%%%%%%%%%%%%%%
\smallskip\goodbreak

{\footnotesize
Pascal Lef\`evre \\
Univ. Artois, Laboratoire de Math\'ematiques de Lens (LML) EA~2462, \& F\'ed\'eration CNRS Nord-Pas-de-Calais FR~2956, 
Facult\'e Jean Perrin, Rue Jean Souvraz, S.P.\kern 1mm 18 
F-62\kern 1mm 300 LENS, FRANCE \\
pascal.lefevre@univ-artois.fr
\smallskip

Daniel Li \\ 
Univ. Artois, Laboratoire de Math\'ematiques de Lens (LML) EA~2462, \& F\'ed\'eration CNRS Nord-Pas-de-Calais FR~2956, 
Facult\'e Jean Perrin, Rue Jean Souvraz, S.P.\kern 1mm 18 
F-62\kern 1mm 300 LENS, FRANCE \\
daniel.li@univ-artois.fr
\smallskip

Herv\'e Queff\'elec \\
Univ. Lille Nord de France, USTL,  
Laboratoire Paul Painlev\'e U.M.R. CNRS 8524 \& F\'ed\'eration CNRS Nord-Pas-de-Calais FR~2956 
F-59\kern 1mm 655 VILLENEUVE D'ASCQ Cedex, FRANCE \\
Herve.Queffelec@univ-lille.fr
\smallskip
 
Luis Rodr{\'\i}guez-Piazza \\
Universidad de Sevilla, Facultad de Matem\'aticas, Departamento de An\'alisis Matem\'atico \& IMUS,  
Calle Tarfia s/n  
41\kern 1mm 012 SEVILLA, SPAIN \\
piazza@us.es
}

\end{document}